\def\ie{{i.e., }}
\def\eg{{e.g., }}
\def\D{Dif\!f_0(M)}
\def\DT{Dif\!f_0(\mathbb T^n)}
\def\X{\EuScript X_L}
\def\HH{Top_0(M)}
\def\HHT{Top_0(\mathbb T^n)}
\def\HHTT{Top_0(\mathbb T^2)}
\def\P{ P}
\def\Ps{{ P}^s}
\def\PP{\EuScript P}
\def\M{\EuScript M}
\def\Z{\mathcal Z}
\def\A{\mathbb A}
\def\N{\EuScript N}
\def\NN{\mathbb N}
\newtheorem{theorem}{Theorem}
\newtheorem{prop}[theorem]{Proposition}
\newtheorem{lemma}[theorem]{Lemma}
\newtheorem*{ass}{Assertion}
\newtheorem{fact}{Fact}
\newtheorem{ffact}{Fact}
\newtheorem*{lemmax}{Lemma~\ref{lemma1}$'$}
\theoremstyle{remark}
\newtheorem*{remarks}{Remarks}
\newtheorem*{remark}{Remark}
\theoremstyle{remark}
\renewcommand\pisces{\textup{H}}
\renewcommand\hat\widehat
\begin{document}
\author{ F. Thomas Farrell and Andrey Gogolev$^\ast$ }
\thanks{$^\ast$Both authors were partially supported by NSF grants.}
\title[The space of Anosov diffeomorphisms]{The space of Anosov diffeomorphisms}
\begin{abstract}
We consider the space $\X$ of Anosov diffeomorphisms homotopic to a fixed automorphism $L$ of an infranilmanifold $M$. We show that if $M$ is the 2-torus $\mathbb T^2$ then $\X$ is homotopy equivalent to $\mathbb T^2$. In contrast, if dimension of $M$ is large enough, we show that $\X$ is rich in homotopy and has infinitely many connected components. 
\end{abstract}

 \maketitle

\section{Introduction}
Let $M$ be a smooth compact $n$-dimensional manifold that supports
an Anosov diffeomorphism. Recall that a diffeomorphism $f\colon M\to
M$ is called {\it Anosov} if there exist constants $\lambda \in
(0,1)$ and $C>0$ along with a $df$-invariant splitting $TM=E^s\oplus
E^u$ of the tangent bundle of $M$, such that for all $m \ge 0$
\begin{multline*}
\qquad\|df^mv\|\le C\lambda^m\|v\|,\;v\in E^s,\; \\
\qquad\shoveleft{\|df^{-m}v\|\le C\lambda^{m}\|v\|,\;v\in E^u.
\hfill}
\end{multline*}

Currently the only known examples of Anosov diffeomorphisms are
Anosov automorphisms of infranilmanifolds and
diffeomorphisms conjugate to them. Furthermore, global structural
stability of Franks and Manning~\cite{F, M} asserts that any Anosov
diffeomorphism $f$ of an infranilmanifold is conjugate to an Anosov
automorphism $L$ with the conjugacy being homotopic to identity.
See, \eg~\cite{KH} for further background on Anosov diffeomorphisms.

In the light of the above discussion we fix an infranilmanifold $M$ and an
Anosov automorphism $L\colon M\to M$. We shall study the space $\X$
of Anosov diffeomorphisms of $M$ that are homotopic to $L$. In other
words, an Anosov diffeomorphism $f$ belongs to $\X$ if and only if
there exists a continuous path of maps $f_t\colon M\to M$ such that
$f_0=L$ and $f_1=f$. If one has a smooth path of diffeomorphisms
(rather than maps) connecting $L$ and $f$ then we say that $f$ is
{\it isotopic} to $L$. We equip $\X$ with $C^r$-topology, $r= 1,2,\ldots \infty$.

Denote by $\D$ the group of diffeomorphisms of $M$ that are
homotopic to identity. We equip $\D$ with $C^r$-topology, where $r\ge 1$ is the same as before. Also denote by $\HH$ the group of
homeomorphisms of $M$ that are homotopic to identity. Equip $\HH$ with
compact-open topology.

The group $\D$ acts on $\X$ by conjugation. It is easy to see that $L$ has a fixed point. Assume for a moment that $M=\mathbb T^n$ and $L$
has only one fixed point. This guarantees, by~\cite{W}, uniqueness of the
conjugacy given by global structural stability. That is, for every
$f\in \X$ there exists unique $h\in \HHT$ such that $f=h\circ L\circ h^{-1}$.
Therefore we have the following inclusions
$$
\hspace{3cm}\DT \hookrightarrow \X \hookrightarrow \HHT
\hspace{3cm}(\ast)
$$
with the composition $\DT\hookrightarrow \HHT$ being the natural
inclusion. Therefore, one gets topological information about the space $\X$ from
that on $\DT$ and $\HHT$. We will make
precise statements and arguments below which are valid for the general
case; i.e., when $M$ is perhaps not $\mathbb T^n$ or when $L$ has possibly more than one
fixed point.

\section{Results}
Our goal is to provide some information on homotopy
type of the space of Anosov diffeomorphisms $\X$. We start by recalling the definition of an Anosov automorphism.

An {\it infranilmanifold} is a double coset space $M\stackrel{\mathrm{def}}{=}G\backslash N\rtimes G/ \Gamma$, where $N$ is a simply
connected nilpotent Lie group, $G$ is a finite group, and $\Gamma$ is 
a torsion-free discrete cocompact subgroup of the semidirect product $N\rtimes G$. When $G$ is trivial $M$ is called
{\it nilmanifold}.
An automorphism $\widetilde L\colon N\to N$ is called {\it hyperbolic} if the
differential $D \widetilde{L}\colon \mathfrak
{n}\to \mathfrak {n}$ does not have eigenvalues of absolute value 1. If an affine map $L\stackrel{\mathrm{def}}{=}v\cdot\widetilde L$ commutes with $\Gamma$ then it induces an affine diffeomorphism $L$ on $M$.
It is easy to show that if $\widetilde L$ is hyperbolic then $L$ is Anosov. And in this case we call $L$ an {\it Anosov automorphism}.

\begin{theorem}
\label{thm_2torus}
Let $L\colon \mathbb T^2\to \mathbb T^2$ be an Anosov automorphism
of the $2$-torus. Then the space $\X$ of $C^r$, $r\ge 1$, Anosov
diffeomorphisms homotopic to $L$ is homotopy equivalent to $\mathbb
T^2$.
\end{theorem}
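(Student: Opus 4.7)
The plan is to exploit the factorization $(\ast)$ together with classical theorems on surface transformation groups. By Earle--Eells one has $\DT\simeq \mathbb T^2$, by Hamstrom's theorem on homeomorphism groups of closed surfaces $\HHTT\simeq \mathbb T^2$, and the natural inclusion $\DT\hookrightarrow\HHTT$ is a homotopy equivalence; consequently the composition $\DT\hookrightarrow\X\hookrightarrow\HHTT$ in $(\ast)$ is a homotopy equivalence.

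The next step is to exhibit a translation-conjugation action of $\mathbb T^2$ on $\X$, defined by $b\cdot f:=T_b\circ f\circ T_b^{-1}$ where $T_b$ denotes translation by $b$. Walters' uniqueness forces this action to be free: if $T_b$ commutes with $f=hLh^{-1}$, then $h^{-1}\circ T_b\circ h$ commutes with $L$ and is homotopic to the identity, hence must equal the identity, forcing $b=0$. The fixed-point map $p\colon\X\to\mathbb T^2$, $f\mapsto \mathrm{fix}(f)$, is $\mathbb T^2$-equivariant, and the orbit of $L$ provides an explicit section $b\mapsto T_b\circ L\circ T_b^{-1}$, yielding a canonical homeomorphism $\X\cong \mathbb T^2\times\X^0$ where $\X^0=p^{-1}(0)$. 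The same reasoning applied to $\HHTT$ and $\DT$ gives $\HHTT\cong \mathbb T^2\times\HHTT^0$ and $\DT\cong \mathbb T^2\times\DT^0$ with $\DT^0$ and $\HHTT^0$ contractible by Earle--Eells and Hamstrom respectively. Theorem~\ref{thm_2torus} thus reduces to showing that $\X^0$ is contractible.

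The restricted factorization $\DT^0\hookrightarrow\X^0\hookrightarrow\HHTT^0$ sandwiches $\X^0$ between two contractible spaces, but this alone does not force $\X^0\simeq \ast$, and this is where I expect the main obstacle to lie. Given a family $\alpha\colon S^k\to\X^0$, the corresponding family of conjugacies $s\mapsto h_{\alpha(s)}$ is null-homotopic in the contractible space $\HHTT^0$, yet the null-homotopy passes through topological conjugates $h\circ L\circ h^{-1}$ that are generally only homeomorphisms, so it does not lift back to $\X^0$ directly. To overcome this I would combine the topological null-homotopy with a parametric smoothing: use the homotopy equivalence $\DT^0\hookrightarrow\HHTT^0$ to deform the null-homotopy into a smooth diffeomorphism family, producing a nearby family of smooth Anosov diffeomorphisms, and then interpolate this smooth family with $\alpha$ using openness of the Anosov condition in the $C^r$-topology. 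Executing this smoothing and interpolation in a parametrized fashion while preserving the fixation of the origin is where the crucial technical work will reside.
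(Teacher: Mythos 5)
There are two genuine gaps in your proposal, the second of which is the crux of the theorem and is where the paper must deploy substantial machinery you have not supplied.

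First, your freeness/product-decomposition step fails in general. An Anosov automorphism $L$ of $\mathbb T^2$ has $|\det(L-I)|$ fixed points, and this number is typically larger than $1$ (e.g., the square of the cat map has $5$ fixed points). A translation $T_b$ commutes with $L$ exactly when $b$ is a fixed point of $L$, so the stabilizer of $L$ under the conjugation action is the finite group $\mathrm{Fix}(L)$, not trivial. Your appeal to Walters to conclude ``$h^{-1}T_bh$ homotopic to identity and commuting with $L$ implies identity'' is exactly the statement that requires $L$ to have a unique fixed point --- the paper is careful to flag this in the introduction, and its proof instead works with the torus $\mathcal T^2=\{L_p:p\in\mathbb T^2\}$, which is the quotient of $\mathbb T^2$ by $\mathrm{Fix}(L)$, and a continuation of the fixed point along a disk via structural stability rather than a global fixed-point map $\X\to\mathbb T^2$ (which is also not well-defined when $L$ has several fixed points, since the conjugacy $h$ is no longer unique).

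Second, and more seriously, your plan for contracting $\X^0$ does not survive inspection. The family of conjugacies $s\mapsto h_s$ consists of genuine homeomorphisms which are almost never smooth, so the null-homotopy in $\HHTT$ runs through topological conjugates $h\circ L\circ h^{-1}$ that are not Anosov diffeomorphisms. Your proposed fix --- approximate the null-homotopy by a smooth family $g_s\in\DT$ and interpolate using openness of the Anosov condition --- breaks down because the approximation is only $C^0$: replacing $h_s$ by a $C^0$-close diffeomorphism $g_s$ changes $h_sLh_s^{-1}$ only in $C^0$, and Anosov openness is a $C^1$-open condition. There is no reason $g_sLg_s^{-1}$ should be $C^1$-close to $\alpha(s)$, so there is no interpolation. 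This is precisely the obstacle the paper overcomes with the thermodynamic-formalism argument in Appendix~A: starting from $\alpha(s)$, one deforms the unstable-jacobian potential linearly to zero, uses the associated family of leafwise equilibrium measures to manufacture a path of homeomorphisms $h_t$ whose conjugates $h_t\alpha(s)h_t^{-1}$ remain $C^{1+}$ Anosov (properties (P2), (P3) guarantee smoothness along $W^u$ and $W^s$), lands by de la Llave's smooth-rigidity theorem in the smooth conjugacy class of $L_p$, and only then invokes Earle--Eells to push the now-smooth conjugacy into the translation torus. Your sketch skips the essential step of building a homotopy that stays inside the space of smooth Anosov diffeomorphisms.

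The correct high-level skeleton is closer to the paper's than you may realize --- deform a disk in $\X$ into the torus of translated automorphisms, then use Earle--Eells --- but the technical content you label ``parametric smoothing and interpolation'' is the whole theorem, and it requires the equilibrium-state machinery, not just density of diffeomorphisms in homeomorphisms.
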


The proof relies on some standard results and techniques from hyperbolic
dynamics. The outline of the proof is given in Appendix~\ref{appA}.

 Next we collect information about homotopy of $\D$ for higher dimensional $M$. Below
$\mathbb Z_p^\infty$ stands for the direct sum of countably many copies of
 $\mathbb Z_p\stackrel{\mathrm{def}}{=}\mathbb Z/p\mathbb Z$.

\begin{prop}
\label{prop_torus1}
 If  $n\ge 10$, then
$$
\pi_0(\DT)\simeq\mathbb Z_2^\infty\oplus \dbinom{n}{2}\mathbb
Z_2\oplus\sum_{i=0}^{n}\dbinom{n}{i}\Gamma_{i+1},
$$
where $\Gamma_i$, $i=0,\ldots,n$, are the finite abelian groups of Kervaire-Milnor ``exotic"
spheres. Moreover, $\mathbb Z_2^\infty$ maps monomorphically into $\pi_0(\HHT)$ via the map induced by the inclusion $\DT\hookrightarrow\HHT$.
\end{prop}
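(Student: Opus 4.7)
The plan is to combine three standard inputs from high-dimensional smooth topology---Morlet's comparison theorem together with Kirby--Siebenmann smoothing theory, the Hatcher--Wagoner--Igusa computation of $\pi_0$ of the smooth pseudoisotopy space, and Bass--Heller--Swan algebraic $K$-theory---to extract the three summands. The hypothesis $n\ge 10$ enters exactly to place us in Igusa's concordance-stable range.

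For the exotic-sphere summand, I would consider the fibration $\DT\to\HHT\to\HHT/\DT$. By Morlet's theorem, $\HHT/\DT$ is, up to a dimension shift in the stable range, $\mathrm{map}(\mathbb T^n,\mathrm{Top}/\mathrm O)$, using parallelizability of $\mathbb T^n$ to trivialise the relevant bundle. Passing to $\pi_0$, the Atiyah--Hirzebruch spectral sequence collapses (since $H^*(\mathbb T^n;\mathbb Z)$ is torsion-free) and, combined with the Kirby--Siebenmann identification $\pi_{i+1}(\mathrm{Top}/\mathrm O)\simeq \Gamma_{i+1}$, gives $[\mathbb T^n,\mathrm{Top}/\mathrm O]\simeq \sum_{i=0}^n \binom{n}{i}\Gamma_{i+1}$. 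These classes lift to $\pi_0(\DT)$ by choosing a splitting of the above fibration on identity components, and are geometrically realised by taking connected sum with exotic spheres along $i$-dimensional coordinate subtori; by construction they die in $\pi_0(\HHT)$.

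The kernel of $\pi_0(\DT)\to \pi_0(\HHT)$ is controlled by the smooth pseudoisotopy space $\PP^s(\mathbb T^n)$. By Igusa's stability theorem, which applies for $n\ge 10$, $\pi_0 \PP^s(\mathbb T^n)$ coincides with $\pi_0$ of the stable pseudoisotopy space, and the Hatcher--Wagoner--Igusa formula yields a decomposition $\mathrm{Wh}_2(\mathbb Z^n)\oplus \mathrm{Wh}_1(\mathbb Z^n;\mathbb Z_2)$, using $\pi_2(\mathbb T^n)=0$. Iterated Bass--Heller--Swan applied to $\mathrm{Wh}_2(\mathbb Z^n)$, together with the Farrell--Hsiang vanishing $NK_*(\mathbb Z)=0$, produces a finite direct sum of $K_*(\mathbb Z)$-pieces indexed by binomial coefficients, from which the $\binom{n}{2}\mathbb Z_2$ summand is read off. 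The second factor $\mathrm{Wh}_1(\mathbb Z^n;\mathbb Z_2)$, built from a countably-generated $\mathbb Z_2$-module associated to $\mathbb Z_2[\mathbb Z^n]$, contributes the $\mathbb Z_2^\infty$ summand.

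For the final ``moreover'' clause I would run the analogous topological pseudoisotopy theory in parallel and compare. The space $\PP^s_{\mathrm{Top}}(\mathbb T^n)$ has its $\pi_0$ computed (Burghelea--Lashof, Waldhausen) in terms of the same algebraic $K$-theoretic data, and the forgetful map $\PP^s\to \PP^s_{\mathrm{Top}}$ is the identity on the $\mathrm{Wh}_1$-piece that carries $\mathbb Z_2^\infty$. Composing with the map to $\pi_0(\HHT)$ then gives the desired monomorphism. I expect the main obstacle to be precisely this last comparison: the $\Gamma_{i+1}$ summands certainly die under the smooth-to-topological forgetful map, and the $\binom{n}{2}\mathbb Z_2$ summand may do so as well, so one has to verify carefully that $\mathrm{Wh}_1(\mathbb Z^n;\mathbb Z_2)$ is detected by genuinely topological invariants in order to conclude that all of $\mathbb Z_2^\infty$, rather than merely a subgroup, survives in $\pi_0(\HHT)$.
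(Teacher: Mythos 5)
The paper does not re-derive this result; it simply invokes Theorem~4.1 of Hatcher~\cite{H} and Theorem~2.5 of Hsiang--Sharpe~\cite{HS}, with the caveat that those proofs depended on the Hatcher--Wagoner formula for $\pi_0$ of the pseudoisotopy space, which was flawed and corrected by Igusa in~\cite{I84}. So your proposal is best read as an attempted reconstruction of the arguments behind those citations. You correctly identify the Igusa correction, the role of the stability range $n\ge 10$, and the comparison between smooth and topological pseudoisotopy theory needed for the monomorphism claim (the paper handles this via the footnote on p.~401 of~\cite{HS}, invoked explicitly in the proof of Proposition~\ref{prop_nil1}); and your honesty about the weakest step is well placed.

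However, the way you split the three summands among your sources is off in two places. First, the $\dbinom{n}{2}\mathbb Z_2$ summand is not produced by $\mathrm{Wh}_2(\mathbb Z^n)$ in the Hatcher--Wagoner--Igusa formula. Bass--Heller--Swan (Nil-free, since $\mathbb Z$ is regular) gives $K_2(\mathbb Z[\mathbb Z^n])\cong\mathbb Z_2\oplus n\mathbb Z_2\oplus\dbinom{n}{2}\mathbb Z$, so the $\dbinom{n}{2}$-indexed piece you would extract is $K_0(\mathbb Z)=\mathbb Z$, not $\mathbb Z_2$; and in fact $\mathrm{Wh}_2(\mathbb Z^n)$ vanishes, so the pseudoisotopy space contributes only $\mathbb Z_2^\infty$ from $\mathrm{Wh}_1(\mathbb Z^n;\mathbb Z_2)$. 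Second, the Morlet part needs to be set up more carefully. There is no degree shift in Morlet's theorem itself; rather, the $\pi_0$ contribution of the coset space to $\pi_0(\DT)$ comes from the boundary map $\pi_1(\HHT/\DT)\to\pi_0(\DT)$ in the fibration long exact sequence, and by Morlet $\pi_1(\HHT/\DT)\cong[\Sigma\mathbb T^n_+,\mathrm{Top}/\mathrm O]\cong\bigoplus_i\dbinom{n}{i}\pi_{i+1}(\mathrm{Top}/\mathrm O)$. Once you write it this way you see that the $i=2$ term picks up the Kirby--Siebenmann class $\pi_3(\mathrm{Top}/\mathrm O)=\mathbb Z_2$, which differs from $\Gamma_3=0$; this is exactly the $\dbinom{n}{2}\mathbb Z_2$ summand, so that summand belongs with the smoothing-theory side, not the pseudoisotopy side. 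Your ``collapsed AHSS'' remark and the claimed ``connected sum along coordinate subtori'' realization would also need substantial repair (exactness versus splitting, and injectivity of the boundary map, are precisely what makes Hsiang--Sharpe's parametrized-surgery analysis nontrivial). As written, your sketch would not yield the stated formula without reworking both of these points.
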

\begin{proof}
This result is contained in Theorem~4.1 of~\cite{H} and Theorem~2.5 of~\cite{HS} with one caveat. The proofs of both of these theorems depended strongly on a formula given in~\cite{HW} and~\cite{H2}; cf. Theorem~3.1 of~\cite{H}. Igusa found that this formula and its proof were seriously flawed, and he corrected this formula in Theorem~8.$a$.2 of~\cite{I84}. Using Igusa's formula, the two proofs of Proposition~\ref{prop_torus1} mentioned above are valid with minor modifications.\end{proof}
\begin{prop}
\label{prop_torus2}
 Let $p$ be a prime number different from 2 and $k$ be an integer satisfying $2p-4\le k<\frac{n-7}{3}$. Then
$\pi_k(\DT)$ contains a subgroup $S$ such that
\begin{enumerate}
 \item $S\simeq \mathbb Z_p^\infty$ and
\item $S$ maps monomorphically into $\pi_k(\HHT)$ via the map induced by the inclusion $\DT\hookrightarrow\HHT$.
\end{enumerate}
\end{prop}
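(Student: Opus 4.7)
The plan is to mirror the strategy used for Proposition~\ref{prop_torus1}: identify the assertion as a consequence of the existing literature on smooth pseudoisotopy and Waldhausen's algebraic $K$-theory of spaces, while verifying that the arguments go through using Igusa's corrected formula from \cite{I84} in place of the flawed Hatcher-Wagoner formula. Since the target subgroup is $p$-torsion for an odd prime, the modifications required should be smaller than in Proposition~\ref{prop_torus1}.

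First I would reduce the problem from $\DT$ to the smooth pseudoisotopy (concordance) space $\mathcal{P}(\mathbb T^n)$ via Hatcher's spectral sequence that relates $\pi_\ast(\DT)$ to concordance data. The upper bound $k<(n-7)/3$ places us in the concordance-stable range, in which Igusa's stability theorem identifies $\pi_k(\mathcal{P}(\mathbb T^n))$, up to a controlled summand, with a piece of Waldhausen's $A$-theory $A(\mathbb T^n)$. A Bass-Heller-Swan style splitting across the $n$ circle factors expresses this piece in terms of $K_\ast(\mathbb Z)$ together with a countable collection of Nil summands. The hypothesis $2p-4\le k$ is precisely the threshold at which odd $p$-torsion first enters: the Quillen-B\"okstedt $p$-torsion in $K_{2p-3}(\mathbb Z)$ is detected, and because there are countably many distinct iterated Nil/BHS summands in which it can be placed, it produces a copy of $\mathbb Z_p^\infty$ inside $\pi_k(\DT)$.

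For statement (2) I would appeal to the smooth-versus-topological comparison for pseudoisotopy spaces (Burghelea-Lashof, Weiss-Williams): the fibre of the natural map from smooth to topological pseudoisotopy is built from $L$-theoretic data that is essentially $2$-local, so at an odd prime $p$ the induced map on $\pi_k$ is injective on the subgroup $S$ constructed above. Running the same Hatcher spectral sequence argument in the topological category then transports this injection up to $\pi_k(\HHT)$.

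The main obstacle, exactly as in Proposition~\ref{prop_torus1}, will be to verify that Igusa's correction of the Hatcher-Wagoner formula does not perturb the detection of the $\mathbb Z_p^\infty$ summand. Since that correction is $2$-primary in the relevant sense, this should reduce to a careful bookkeeping argument showing that the odd-primary part of the original Hatcher and Hsiang-Sharpe calculations is unaffected, so that the existing literature can be cited with only minor modifications.
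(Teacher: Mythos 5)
Your outline correctly identifies the broad ingredients (pass from $\DT$ to smooth pseudoisotopy, use Igusa's stability theorem to reach the stable range bound $k<(n-7)/3$, and locate $\mathbb Z_p^\infty$ inside $A$-theory of $\mathbb T^n$ via a Bass--Heller--Swan/Nil analysis), and that is essentially the skeleton of the paper's Appendix~B argument. But there are two substantive problems.

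First, the entire mechanism for part (2) hinges on an \emph{involution} on pseudoisotopy spaces that your proposal never mentions. Hatcher's spectral sequence forces $\pi_k\P(t)\colon\pi_k\P(\mathbb T^n)\to\pi_k\HHT$ to factor through the coinvariants $H_0(\mathbb Z_2;\pi_k\P(\mathbb T^n))$ of the ``turning upside-down'' involution $x\mapsto\bar x$ (this is the content of Fact~\ref{fact4} in the paper, using topological rigidity of $\mathbb T^n$). So it is not enough to find $\mathbb Z_p^\infty$ in $\pi_k\Ps(\mathbb T^n)$ and observe that the smooth-to-topological fiber is $2$-locally trivial; one must additionally verify that the constructed $\mathbb Z_p^\infty$ meets the subgroup $\{x-\bar x\}$ only in $0$ (and, to get down from $\Ps$ with the correct involution behavior, also that $x\mapsto x+\bar x$ is monic there). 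The paper bakes this in from the start: Lemma~\ref{lemma1} produces the subgroup of $\pi_kA(\mathbb T^n)$ together with the property that both $x\mapsto x+\bar x$ and $x\mapsto x-\bar x$ are monic on it, and this is the crucial input that survives the trip through Igusa stability, Farrell--Ontaneda's Corollary~4.2, and the quotient to $H_0(\mathbb Z_2;-)$. Without tracking the involution, your argument for (2) has a genuine hole; ``$2$-locality of the smooth/topological difference'' does not by itself prevent an odd $p$-torsion class from being killed when one passes to involution coinvariants.

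Second, the framing in terms of ``Igusa's correction of the Hatcher--Wagoner formula'' belongs to the proof of Proposition~\ref{prop_torus1} (the $\pi_0$ case), not here: for higher $\pi_k$ the paper does not touch Hatcher--Wagoner at all, and instead uses Waldhausen's $A$-theory together with Grunewald--Klein--Macko's computation of the $A$-theoretic Nil terms of $S^1$ (via~\cite{FO}, Proposition~4.6). Relatedly, the source of the $\mathbb Z_p^\infty$ is not ``$p$-torsion in $K_{2p-3}(\mathbb Z)$ distributed over countably many BHS summands''—$K_\ast(\mathbb Z)$ is finitely generated in each degree—but rather the Nil summands themselves, which already have $\mathbb Z_p^\infty$ in a single degree; the inductive step over the $n$ circle factors (using~\cite{HKVWW} for the involution-compatibility of the splitting) is what promotes this from $S^1$ to $\mathbb T^n$.
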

We postpone the proof of the above proposition to Appendix~\ref{appB}.

\begin{prop} 
\label{prop_nil1}
If $M$ is an infranilmanifold of dimension  $n\ge 10$ then
$$
\mathbb Z_2^\infty < \pi_0(\D).
$$
Moreover, $\mathbb Z_2^\infty$ maps monomorphically into $\pi_0(\HH)$ via the map induced by the inclusion $\D\hookrightarrow\HH$.
\end{prop}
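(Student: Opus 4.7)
The plan is to adapt the proof of Proposition~\ref{prop_torus1} by passing to a finite nilmanifold cover of $M$. Since $M = G \backslash N \rtimes G / \Gamma$, the subgroup $\Gamma \cap N$ is a torsion-free lattice in $N$ of finite index in $\Gamma$, and $\widetilde M := N/(\Gamma \cap N)$ is a nilmanifold that finitely covers $M$ with deck group a finite quotient of $G$. Every $f \in \D$ is homotopic to the identity and hence acts trivially on $\pi_1(M)$, so it lifts to a diffeomorphism of $\widetilde M$ commuting (up to isotopy) with the deck group action. This yields a natural homomorphism from $\D$ into the $G$-equivariant diffeomorphism group of $\widetilde M$.

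I would then run the pseudoisotopy construction underlying Proposition~\ref{prop_torus1} directly on $\widetilde M$, arranged to be $G$-equivariant. The construction for $\mathbb T^n$ depends on the torus only through $\pi_1 \mathbb T^n = \mathbb Z^n$ and, more specifically, through the presence of a $\mathbb Z_2^\infty$ summand in the appropriate quotient of $Wh_2(\mathbb Z^n)$ coming from Igusa's corrected formula. Since $\pi_1 \widetilde M$ is a finitely generated torsion-free nilpotent group of rank $\ge n$, the analogous $K$-theoretic input (a $\mathbb Z_2^\infty$ inside the corresponding obstruction group) should be available via the computation of $Wh_2$ for torsion-free nilpotent groups. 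Descending the resulting equivariant exotic diffeomorphisms to $M$ then produces the claimed $\mathbb Z_2^\infty$ subgroup of $\pi_0(\D)$.

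The hard part will be verifying that (i) the descended elements remain non-trivial in $\pi_0(\D)$, and (ii) they inject into $\pi_0(\HH)$. For (i), one uses naturality of the $K$-theoretic detecting invariant under the finite cover $\widetilde M \to M$; since $G$ is finite and every finite-index subgroup of $\mathbb Z_2^\infty$ is still $\mathbb Z_2^\infty$, a full $\mathbb Z_2^\infty$ subgroup survives descent (the transfer composed with the cover homomorphism is multiplication by $|G|$, which kills $2$-torsion only up to finite index). For (ii), the Burghelea-Lashof comparison between smooth and topological pseudoisotopy shows that the same $K$-theoretic invariant also detects elements in $\pi_0(Top_0(\widetilde M))$, so the monomorphism statement of Proposition~\ref{prop_torus1} carries over to the infranilmanifold case. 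The subtle point, and the genuine obstacle, is choosing a $\mathbb Z_2^\infty$ that is simultaneously $G$-invariant, survives descent, and is detected topologically; this should follow from equivariance of the Hatcher--Wagoner--Igusa detecting invariant and the fact that $G$ acts by automorphisms of $\pi_1(\widetilde M)$ fixing the relevant $K$-theoretic classes up to finite index.
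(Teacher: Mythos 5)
Your route through a finite nilmanifold cover $\widetilde M$ is not the one the paper takes, and it runs into a genuine obstacle. The paper applies Hsiang--Sharpe's Proposition~2.2(A) \emph{directly} to $M$: that result is not special to the torus but holds for any manifold of dimension $\ge 10$ whose fundamental group has finitely generated center and infinitely many conjugacy classes. The proof then consists of verifying these hypotheses on $\pi_1(M)$ for an infranilmanifold (the center of the normal nilpotent subgroup $N\triangleleft\pi_1(M)$ is finitely generated and infinite, and its elements have finite $\pi_1(M)$-conjugacy classes, whence $Wh_1(\pi_1(M);\mathbb Z_2)=\mathbb Z_2^\infty$), plus a small algebraic lemma from~\cite{Far} to dispose of the involution quotient $H_0(\mathbb Z_2;\mathbb Z_2^\infty)$. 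The monomorphism into $\pi_0(\HH)$ comes from the topological version of the same Hsiang--Sharpe proposition. No covering space, no equivariance, no transfer.

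The concrete gap in your argument is the descent step. You argue that a $\mathbb Z_2^\infty$ survives descent because ``the transfer composed with the cover homomorphism is multiplication by $|G|$, which kills $2$-torsion only up to finite index.'' But if $|G|$ is even --- and nothing in the setup forbids it; $G$ is an arbitrary finite group acting freely --- then multiplication by $|G|$ on a group of exponent $2$ is the zero map, not a map with finite cokernel. So the transfer tells you nothing in exactly the case of interest, and you cannot conclude any nontrivial image in $\pi_0(\D)$ this way. A separate, softer issue is that you invoke a $G$-equivariant refinement of the Hatcher--Wagoner--Igusa pseudoisotopy machinery without any reference; equivariant pseudoisotopy theory for a free action is not a formal consequence of the nonequivariant theory, and making the $\mathbb Z_2^\infty$ simultaneously $G$-invariant, descent-surviving, and topologically detected is precisely what you flag as the hard part --- but no mechanism is offered. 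The paper sidesteps both problems entirely by never leaving $M$.
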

\begin{proof}
This result follows from a slightly augmented form of Proposition~2.2(A) in~\cite{HS} with the same caveat made in the proof of our Proposition~\ref{prop_torus1}. Since $M$ is an infranilmanifold, $\pi_1(M)$ contains a normal nilpotent subgroup $N$ with a finite quotient group $\pi_1(M)/N$. Now note that the center $\Z(N)$ of $N$ is a finitely generated, infinite abelian group. Hence $\Z(\pi_1(M))=\pi_1(Aut(M))$ is also finitely generated (but perhaps not infinite); thus verifying one of the hypotheses of Proposition~2.2(A). And since the $\pi_1(M)$ conjugacy class of any element in $\Z(N)$ is finite, $\pi_1(M)$ contains an infinite number of distinct conjugacy classes; therefore
$$
Wh_1(\pi_1(M);\mathbb Z_2)=\mathbb Z_2^\infty.
$$
Then
$$
Wh_1(\pi_1(M);\mathbb Z_2)/\{c+\varepsilon\bar c\}=H_0(\mathbb Z_2;\mathbb Z_2^\infty)\simeq\mathbb Z_2^\infty
$$
by the simple algebraic argument given on page 287 of~\cite{Far}. That is, we don't need to know whether ``$\pi_1(M)$ contains infinitely many conjugacy classes distinct from their inverse classes" as hypothesized in Proposition~2.2 of~\cite{HS} to complete the argument given in that paper which produces a subgroup $\mathbb Z_2^\infty$ of $\pi_0(\D)$. (The diffeomorphisms representing the elements of $\mathbb Z_2^\infty$ are all homotopic to $id_M$ since they are constructed to be pseudo-isotopic to $id_M$.) Since Proposition~2.2 is also true in the topological category (cf. footnote $(i)$ on page 401 of~\cite{HS}), this subgroup $\mathbb Z_2^\infty$ maps monomorphically into $\pi_0(\HH)$.
\end{proof}

\begin{prop}
\label{prop_nil2}
 Let $M$ be an $n$-dimensional infranilmanifold and $p$ be a prime number different from 2. Assume that the first Betti number of $M$ is non-zero, \ie $H_1(M,\mathbb Q)\neq 0$ and that $n>6p-5$. Then there exists a subgroup $S$ of $\pi_{2p-4}(\D)$ such that
\begin{enumerate}
 \item $S\simeq \mathbb Z_p^\infty $ and
\item $S$ maps monomorphically into $\pi_{2p-4}(\HH)$ via the map induced by the inclusion $\D\hookrightarrow\HH$.
\end{enumerate}
\end{prop}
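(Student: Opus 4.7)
The plan is to adapt the proof of Proposition~\ref{prop_torus2} (the torus analog) to the infranilmanifold setting; the hypothesis $H_1(M, \mathbb{Q}) \neq 0$ will be used to guarantee that $\pi_1(M)$ carries infinitely many conjugacy classes of infinite-order elements, which is what drives the $\mathbb Z_p^\infty$ at the algebraic level. Indeed, $H_1(M, \mathbb Q) \neq 0$ forces the abelianization $\pi_1(M)^{ab}$ to have positive rank, so there is a surjection $\pi_1(M) \twoheadrightarrow \mathbb Z$ (which splits, since $\mathbb Z$ is free). Powers of any lift $g \in \pi_1(M)$ of a generator of $\mathbb Z$ produce infinitely many pairwise non-conjugate infinite-order elements of $\pi_1(M)$, since conjugation preserves the image in the abelianization.

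Applying the odd-prime analog of the B\"okstedt-Hsiang-Madsen/Loday mechanism already used in the proof of Proposition~\ref{prop_torus2}, each such conjugacy class contributes a linearly independent $\mathbb Z_p$ to $Wh_{2p-3}(\pi_1(M); \mathbb Z_p)$, so that this Whitehead group contains a subgroup isomorphic to $\mathbb Z_p^\infty$. Igusa's stability theorem applies, since the hypothesis $n > 6p-5$ is equivalent to $2p-4 < (n-7)/3$, so the stabilization map $\mathcal{P}^{diff}(M) \to \mathbb{P}^{diff}(M)$ induces isomorphisms on $\pi_i$ for $i \leq 2p-5$. The corrected Hatcher-Wagoner-Igusa formula of~\cite{I84} then identifies this $\mathbb Z_p^\infty$ subgroup as sitting inside $\pi_{2p-5}(\mathcal{P}^{diff}(M))$, and the standard Cerf-Hatcher pseudoisotopy construction relating $\mathcal{P}^{diff}(M)$ to $\D$ promotes it to a subgroup $S \simeq \mathbb Z_p^\infty$ of $\pi_{2p-4}(\D)$.

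For the topological monomorphism statement (part 2), the same construction runs in parallel in the topological category. The smooth-versus-topological difference in stable pseudoisotopy theory is captured (in the relevant stable range) by a spectrum whose $p$-local contribution is concentrated at the prime $2$; since $p$ is an odd prime by hypothesis, the algebraic classes detecting $S$ are faithfully visible in both the smooth and topological Whitehead-group pictures, and the inclusion $\D \hookrightarrow \HH$ embeds $S$ into $\pi_{2p-4}(\HH)$.

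The main obstacle will be bookkeeping the algebraic $K$-theory and pseudoisotopy comparisons: verifying that the $\mathbb Z_p^\infty$ built from conjugacy classes remains independent after passing through (i) the $\mathbb Z_p$-coefficient Whitehead-group construction, (ii) Igusa's pseudoisotopy identification, (iii) the map from $\pi_*(\mathcal{P}^{diff}(M))$ to $\pi_*(\D)$, and (iv) the smooth-to-topological comparison. As emphasized in the proof of Proposition~\ref{prop_torus1}, all of this requires Igusa's corrected version~\cite{I84} of the Hatcher-Wagoner formula in place of the original (flawed) formula. I expect the argument to parallel that of Proposition~\ref{prop_torus2} very closely, with the only essentially new ingredient being the extraction of infinitely many conjugacy classes of infinite-order elements from the assumption $H_1(M, \mathbb{Q}) \neq 0$.
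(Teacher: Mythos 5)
Your proposal misidentifies the source of the $\mathbb Z_p^\infty$ and glosses over the hardest step. The paper does not extract $\mathbb Z_p^\infty$ from ``infinitely many conjugacy classes of infinite-order elements'' contributing independent copies of $\mathbb Z_p$ to $Wh_{2p-3}(\pi_1(M);\mathbb Z_p)$; that mechanism is what underlies the $\mathbb Z_2^\infty$ in $\pi_0$ (Proposition~\ref{prop_nil1}), not the higher odd-primary statements. Here the hypothesis $H_1(M,\mathbb Q)\neq 0$ is used instead to produce a smooth simple closed curve $\alpha$ generating an infinite cyclic summand of $H_1(M)$, and the whole argument is routed through the pseudo-isotopy functors $\Ps(\cdot),\P(\cdot)$ applied to a closed tubular neighborhood $T$ of $\alpha$ (which is $S^1\times\mathbb D^{n-1}$ or its twisted analogue). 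The $\mathbb Z_p^\infty$ lives in $\pi_{2p-4}\Ps(T)$ and ultimately comes from the nil-term calculations for $A(S^1)$ in~\cite{GKM} via Proposition~4.6 of~\cite{FO} and Igusa stability --- a single $\mathbb Z\subset\pi_1(M)$ already produces the whole $\mathbb Z_p^\infty$. To inject this into $\pi_{2p-4}\P(M)$ one needs the retraction $M\to S^1$ (your splitting), so your observation is relevant but put to a different use than you suggest.

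The more serious gap is the passage from pseudo-isotopy to $\D$ and especially to $\HH$. It is not enough to observe, as you do, that the smooth/topological difference in stable pseudo-isotopy is $2$-locally concentrated. Getting from $\pi_k\P(M)$ into $\pi_k\HH$ with controlled kernel requires two ingredients your proposal omits: (i) the involution ``$\,\bar{}\,$'' on pseudo-isotopy spaces, and the property that both $x\mapsto x+\bar x$ and $x\mapsto x-\bar x$ are monic on the chosen $\mathbb Z_p^\infty$ (property~2 of Fact~\ref{fact5}), so that the composite factors through $H_0(\mathbb Z_2;\pi_k\P(M))$ with finite kernel; and (ii) topological rigidity of infranilmanifolds~\cite{FH}, which enters through Hatcher's spectral sequence to produce the factorization $\varphi\colon H_0(\mathbb Z_2;\pi_k\P(M))\to\pi_k\HH$ of Fact~\ref{fact4} whose kernel is $2$-primary. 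Without something like this, there is no reason the classes constructed in pseudo-isotopy survive in $\pi_{2p-4}\HH$, and indeed the torus case (Proposition~\ref{prop_torus2}) goes through exactly the same $H_0(\mathbb Z_2;-)$ bottleneck. You should also note the extra care needed when $T$ is non-orientable, where a transfer argument (multiplication by~2 on $\pi_k\Ps(S^1\times\mathbb D^{n-1})$) is needed to carry the $p$-torsion subgroup into $\pi_k\Ps(T)$.
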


\begin{remark}
 The first Betti number of any nilmanifold is different from zero.
\end{remark}
We postpone the proof of the above proposition to Appendix~\ref{appB}.

\begin{theorem}

Let $M$ be an $n$-dimensional infranilmanifold, $L\colon M\to M$ be an Anosov automorphism and $\X$ be the space of $C^r$, $r\ge 1$,  Anosov diffeomorphisms homotopic to $L$, then the following is true
\begin{enumerate}
\item If $M=\mathbb T^n$ and $n\ge 10$, then $\X$ has infinitely many connected components.
\item If $M=\mathbb T^n$, $p$ is a prime number different from 2 and $k$ is an integer satisfying $2p-4\le k<\frac{n-7}{3}$, then
$$
\mathbb Z_p^\infty < \pi_k(\X).
$$
\item Let $M$ be an infranilmanifold of dimension $n\ge10$, then
 $\X$ has infinitely many connected components.
\item If $p$ is a prime number different from 2 and $M$ is an infranilmanifold of dimension $n>6p-5$ with a non-zero first Betti number then
$$
\mathbb Z_p^\infty < \pi_{2p-4}(\X).
$$
\end{enumerate}
\end{theorem}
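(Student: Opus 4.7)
My plan is to factor the inclusion-induced map $\pi_k(\D)\to\pi_k(\HH)$ (respectively $\pi_k(\DT)\to\pi_k(\HHT)$) through $\pi_k(\X)$. Consider the orbit map of the conjugation action at $L$,
$$
\alpha\colon \D\to\X,\qquad \alpha(g)=gLg^{-1},
$$
together with the Franks--Manning conjugacy map $\beta\colon \X\to\HH$ sending each $f\in\X$ to the (suitably normalized) topological conjugacy $h$ with $f=hLh^{-1}$. Then $\beta\circ\alpha$ agrees with the natural inclusion $\D\hookrightarrow\HH$, so on homotopy groups one obtains a factorization
$$
\pi_k(\D)\xrightarrow{\alpha_\ast}\pi_k(\X)\xrightarrow{\beta_\ast}\pi_k(\HH)
$$
of the map $i_\ast$ induced by inclusion; the same diagram works with $\DT$, $\HHT$ in place of $\D$, $\HH$ in the torus cases.

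With this factorization in hand, each of Propositions~\ref{prop_torus1}--\ref{prop_nil2} supplies a subgroup $S$ of $\pi_k(\D)$ (or $\pi_k(\DT)$)---isomorphic to $\mathbb Z_2^\infty$ when $k=0$ in parts (1) and (3), and to $\mathbb Z_p^\infty$ for the stated range of $k$ in parts (2) and (4)---on which $i_\ast$ is injective. Since $i_\ast=\beta_\ast\circ\alpha_\ast$, the restriction $\alpha_\ast|_S$ must also be injective, so $S$ embeds in $\pi_k(\X)$. For $k=0$ this yields $\mathbb Z_2^\infty<\pi_0(\X)$, hence infinitely many connected components of $\X$, settling (1) and (3); for the higher values of $k$ it yields the desired $\mathbb Z_p^\infty<\pi_k(\X)$, settling (2) and (4).

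The main obstacle is making the conjugacy map $\beta$ well-defined and continuous. When $M=\mathbb T^n$ and $L$ has a unique fixed point, Walters' theorem~\cite{W} supplies uniqueness of the conjugacy and $\beta$ is immediate, as indicated after the displayed inclusions in the introduction. In general---$L$ with several fixed points, or $M$ a non-toral infranilmanifold---I would fix once and for all a point $p_0\in\mathrm{Fix}(L)$ together with a lift $\tilde p_0$ in the universal cover $N$, and for each $f\in\X$ single out the unique conjugacy whose canonical lift to $N$ fixes $\tilde p_0$. Uniqueness is the standard fact that a conjugacy between Anosov systems on a compact manifold is determined by its value at a single point (via contraction along stable/unstable leaves), and continuity of $\beta$ in the $C^r$-topology on the source and compact-open topology on the target follows from the uniform estimates built into structural stability. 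A subtle point is that for $\beta\circ\alpha=i$ to hold literally, one wants each representative $g$ of $S$ to satisfy $g(\tilde p_0)=\tilde p_0$; fortunately, as noted in the proof of Proposition~\ref{prop_nil1}, the generators of $S$ are pseudo-isotopic to $\Id$ and can therefore be chosen supported in a coordinate ball disjoint from $p_0$, rendering this harmless.
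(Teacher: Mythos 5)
Your factorization strategy — mapping a subgroup of $\pi_k(\D)$ into $\pi_k(\X)$ via conjugation and detecting it in $\pi_k(\HH)$ via the conjugating homeomorphism — is morally the same as the paper's, but there is a genuine gap in the way you try to implement it, namely in making the conjugacy assignment $\beta\colon\X\to\HH$ a single-valued continuous map.

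The set of conjugacies $\{h\in\HH: f=hLh^{-1}\}$ is a torsor under the centralizer $Z_0(L)=\{g\in\HH: gL=Lg\}$, which is non-trivial in general: already for $M=\mathbb T^n$ one has $Z_0(L)\cong\mathrm{Fix}(L)$ acting by translations, and $L$ typically has many fixed points. Your proposed normalization — pick the conjugacy whose canonical lift $\tilde h$ fixes $\tilde p_0$ — cannot be carried out on all of $\X$: if $\tilde h(\tilde p_0)=\tilde p_0$ then $h(p_0)=p_0$, which forces $f(p_0)=h(L(p_0))=p_0$, i.e.\ $p_0\in\mathrm{Fix}(f)$. But an arbitrary $f\in\X$ need not fix $p_0$. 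Choosing the generators of $S$ to be supported away from $p_0$ arranges that the particular elements $\alpha(g)=gLg^{-1}$ fix $p_0$, but this does nothing to give a coherent choice of conjugacy for the rest of $\X$, where a null-homotopy of $\alpha\circ s$ would live. So $\beta$ remains multi-valued, and the equation $i_\ast=\beta_\ast\circ\alpha_\ast$ is not established.

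The paper avoids this by never constructing a global $\beta$. For parts (2) and (4) it argues directly with a putative null-homotopy $\beta_t$ of $\alpha\circ L\circ\alpha^{-1}$: structural stability is applied parametrically along the homotopy (not globally over $\X$) to produce $\alpha_t$ with $\alpha_1$ taking values in $Z_0(L)$; local uniqueness of the conjugacy then forces $\alpha_1$ to be the constant $id_M$, giving the contradiction. For parts (1) and (3) the same parametrized-stability argument along a connecting path reduces the problem to a single element of $Z_0(L)$, and then Proposition~\ref{prop_translation} — the key extra input, proved in Appendix~\ref{appC}, asserting that any homeomorphism of $M$ homotopic to $id_M$ that commutes with $L$ is in fact isotopic to $id_M$ — is used to kill the centralizer ambiguity in $\pi_0(\HH)$. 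Your proposal does not invoke anything like Proposition~\ref{prop_translation}, and this is exactly the ingredient needed to handle the non-trivial centralizer outside the special case where $L$ has a unique fixed point (the one case, noted after the displayed inclusions $(\ast)$ in the introduction, where your global $\beta$ would actually exist).
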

\begin{remark}
 Assertions 1, 2, 3 and 4 above rely on Propositions 2, 3, 4 and 5 respectively. For this reason, even though assertion 1 is implied by assertion 3, we give a separate statement in the torus case. In fact, in the case when $M=\mathbb T^n$ we have more information about the induced map $\pi_0(\D)\to\pi_0(\X)$. In~\cite{FG} we show that this homomorphism is not monic.
\end{remark}

\begin{proof} We prove the fourth and the third assertions only. The proofs of the
other assertions are the same. 

We begin with the proof of assertion 4. Let $id_M$ and $L$ be the base-points in $\D$ and $\X$, respectively. Pick two different elements $[\gamma_1], [\gamma_2]\in \mathbb Z_p^\infty
< \pi_{2p-4}(\D)$. To prove assertion 4 we need to show that the loops $\gamma_1\circ L\circ \gamma_1^{-1}$ and $\gamma_2\circ L\circ \gamma_2^{-1}$ are different in $\pi_{2p-4}(\X)$. Let $\alpha=\gamma_1\circ\gamma_2^{-1}$ and $\beta=\alpha\circ L\circ\alpha^{-1}$. Clearly $[\alpha]$ is non-trivial in $\pi_{2p-4}(\D)$ and it is enough to show that $[\beta]$ is non-trivial in $\pi_{2p-4}(\X)$.

Assume that there exists a homotopy $\beta_t$, $t\in[0,1]$, such that $\beta_0=\beta$ and $\beta_1=L$. Then structural stability yields a homotopy $\alpha_t$, $t\in [0,1]$ such that $\alpha_0=\alpha$ and $\alpha_1$ commutes with $L$, \ie $\alpha_1(x)\circ L=L\circ\alpha_1(x)$ for all $x\in \mathbb D^{2p-4}$. Hence, by local uniqueness part of structural stability theorem, $\alpha_1$ is constant. On the other hand, we know that $\alpha_0|_{\partial \mathbb D^{2p-4}}=id_M$ and hence, again by local uniqueness, $\alpha_t|_{\partial \mathbb D^{2p-4}}=id_M$ for all $t\in[0,1]$. Hence $\alpha_1=id_M$ and we conclude that $[\alpha]=[id_M]$, which is a contradiction. 

\begin{figure}[htbp]
\begin{center}
\includegraphics{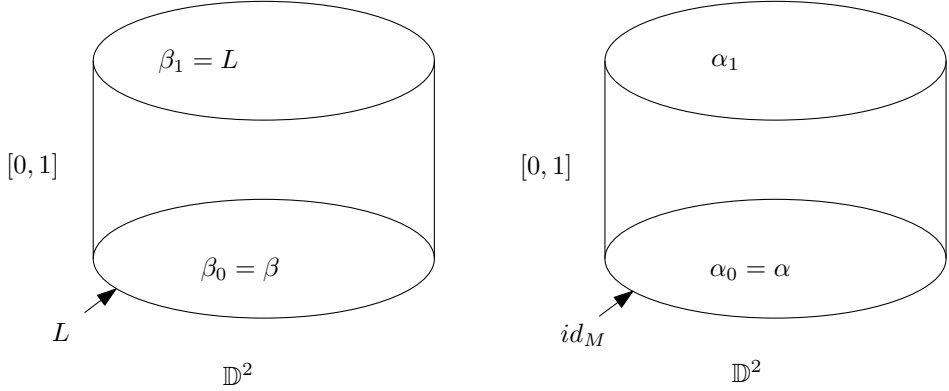}
\end{center}
 \caption{Homotopies of $\beta$ and $\alpha$ when $p=3$.}
\label{fig1}
\end{figure}

The proof of assertion 3 is more involved because we have to deal with the centralizer of $L$.
We will rely on the following proposition whose proof we postpone to Appendix~\ref{appC}.
\begin{prop} \label{prop_translation} Let $M$ be an infranilmanifold. Assume that a homeomorphism $h\colon M\to M$ is homotopic to identity and commutes with an Anosov automorhism $L\colon M\to M$. Then $h$ is isotopic to $id_M$.
\end{prop}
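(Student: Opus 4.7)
The plan is to lift to the universal cover, show via hyperbolic rigidity that $h$ is a right translation on the simply connected nilpotent model, and then obtain the isotopy by flowing along a one-parameter subgroup. Let $\pi\colon N\to M$ be the universal covering by the simply connected nilpotent Lie group $N$, $\widetilde L = v\cdot\bar L$ the affine hyperbolic lift of $L$, and $\widetilde h\colon N\to N$ a lift of $h$ equivariant for the deck group $\Gamma\subset N\rtimes G$ (which exists because $h$ is homotopic to $id_M$ and hence induces the identity on $\pi_1(M) = \Gamma$). Writing $\widetilde h(x) = x\cdot \exp U(x)$ for $U\colon N\to\mathfrak n$, the equivariance together with the finiteness of $G$ and the compactness of $M$ forces $U$ to be bounded on $N$.

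The commutation $hL = Lh$ lifts to $\widetilde h\widetilde L = \gamma_0\widetilde L\widetilde h$ for some $\gamma_0\in\Gamma$, and a direct computation using $\bar L\circ\exp = \exp\circ D\bar L$ together with the BCH formula yields the functional equation
$$
U(\widetilde L x) = \mathrm{Ad}(g_x^{-1})X_0 + D\bar L\cdot U(x) + (\text{BCH corrections}),
$$
with $g_x = v\bar L(x)$ and $X_0 := \log\gamma_0 \in \mathfrak n$. Analysing this equation inductively along the lower central series of $\mathfrak n$ — so that at each graded piece the equation reduces to the abelian (torus) equation — one sees that boundedness of $U$ forces $X_0\in\mathfrak z(\mathfrak n)$, which kills the polynomial growth of $\mathrm{Ad}(g_x^{-1})X_0$. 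The BCH correction then also disappears, and the equation simplifies to $U(\widetilde L x) = X_0 + D\bar L\cdot U(x)$.

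Setting $W_0 := (\mathrm{Id} - D\bar L)^{-1}X_0$ (well-defined because $D\bar L$ has no eigenvalue $1$) and $\widetilde U := U + W_0$ turns the equation into $\widetilde U(\widetilde L^n x) = (D\bar L)^n \widetilde U(x)$ with $\widetilde U$ bounded. Decomposing $\widetilde U = \widetilde U^s + \widetilde U^u$ along the hyperbolic splitting $\mathfrak n = \mathfrak n^s \oplus \mathfrak n^u$ of $D\bar L$, iterating forward kills $\widetilde U^u$ (the right-hand side expands exponentially on $\mathfrak n^u$, contradicting the bound), and iterating backward kills $\widetilde U^s$ similarly. Hence $U\equiv -W_0$, so $\widetilde h$ is the right translation by $w := \exp(-W_0)\in N$, and $h$ is induced on $M$ by this translation. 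The path $h_t(x) := x\cdot\exp(-tW_0)$ is then a smooth isotopy in $\D$ from $id_M$ to $h$.

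The main obstacle I anticipate is the inductive analysis along the lower central series in the second paragraph: at each graded piece one must carefully extract the leading polynomial term of $\mathrm{Ad}(g_x^{-1})X_0$ and show that, together with the BCH correction, its growth contradicts boundedness of $U$ unless $X_0$ commutes with $\mathfrak n$ to the appropriate depth. A secondary technical point is the infranilmanifold case (nontrivial $G$): the equivariance of $\widetilde h$ is then twisted by the finite-group action on $N$, but because $h$ is homotopic to $id_M$ the induced map on the finite quotient $\Gamma/(\Gamma\cap N)$ is trivial, which reduces the bookkeeping essentially to the nilmanifold cover and allows the rigidity argument to go through with minor modifications.
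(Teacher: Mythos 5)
Your approach is genuinely different from the paper's. The paper never touches a cocycle $U$: it passes to the nilmanifold cover $\hat M$, uses a fixed-point trick to show $\hat h$ actually commutes with a power $\hat L^q$ (not just up to a deck transformation), then cites Walters' theorem to conclude $\hat h$ is affine, and a separate short lemma (their Fact~4) to upgrade ``affine and homotopic to $\mathrm{id}$'' to ``translation.'' The isotopy is then produced via three further lemmas (Facts~1--3) about the centralizer $N^{\pi_1 M}$. You instead take the functional equation on $N$, exploit boundedness of $U$ (which is exactly where the homotopy-to-identity hypothesis is used), and run the standard hyperbolic boundedness argument to kill $\widetilde U$ directly. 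This is a legitimate bare-hands route that avoids Walters and compresses the connectedness lemmas into the observation that a one-parameter subgroup through a $\pi_1(M)$-compatible translation is itself $\pi_1(M)$-compatible (Mal$'$cev's uniqueness, the same idea behind the paper's Fact~1). What the paper buys by modularizing is that each cited result is standard; what your route buys is self-containedness and a cleaner reduction to linear algebra once the normalization works.

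That said, there are two concrete gaps, only one of which you flag adequately. First, the lower central series induction is not a technicality: the unboundedness argument requires you to extract, at each graded level, the leading polynomial term of $\mathrm{Ad}\bigl((\widetilde L x)^{-1}\bigr)X_0 - X_0$ and show it cannot be cancelled by the BCH commutator terms (which also grow once $X_0$ fails to commute with everything one level down). You correctly identify this as the main obstacle, but as written it is an assertion, not a proof; the paper avoids the entire issue by invoking Walters.

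Second, and more seriously, you need $\gamma_0 \in N$ for $X_0 = \log\gamma_0$ to make sense, and in the infranilmanifold case $\gamma_0$ is only known to lie in $\pi_1(M) \subset N\rtimes G$ and may have a nontrivial $G$-component. Your suggested fix --- that $h \simeq \mathrm{id}_M$ forces triviality on the quotient $\Gamma/(\Gamma\cap N)$ --- does not work: the element $\gamma_0$ arises from the relation $\widetilde h\widetilde L = \gamma_0\widetilde L\widetilde h$, i.e.\ from the interaction of $h$ with $L$, and is not controlled by $h_*$ alone (indeed it changes by a twisted conjugation when you change the lift $\widetilde h$). The paper handles this with a fixed-point argument: choose a fixed point $x_0$ of $L$, pass to a power $L^q$ fixing both $\hat x_0$ and $\hat h(\hat x_0)$, and use freeness of the $G$-action to conclude the $G$-part of the obstruction is trivial. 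You need this trick (or the equivalent: iterate $L$ until the $G$-component of the cocycle $n\mapsto\gamma_n$ returns to the identity in the finite group $G$) before the functional equation you wrote down is even well posed. There are also minor convention issues --- with $\pi_1(M)$ acting on the right, you should write $\widetilde h(x) = \exp(U(x))\cdot x$ to get $U$ bounded, and there is a sign slip in the normalization $\widetilde U = U - W_0$ --- but those are cosmetic compared to the two points above.
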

Take $h_1, h_2 \in \D$ that
represent different elements $[h_1]$, $[h_2]$ of $\mathbb Z_2^\infty
< \pi_0(\D)$. Let $f_1=h_1\circ L\circ h^{-1}_1$ and $f_2=h_2\circ L\circ h^{-1}_2$. Assume
that there is a path of Anosov diffeomorphisms $f_t$, $t\in [1, 2]$,
connecting $f_1$ and $f_2$. By structural stability and local
uniqueness we get a continuous path $\{\tilde{h}_t,
t\in[1, 2]\}$ in $\HH$ such that $\tilde{h}_1=h_1$ and
$f_t=\tilde{h}_t\circ L\circ \tilde{h}_t^{-1}$. Hence we get
$$
h_2\circ L\circ h_2^{-1}=\tilde{h}_2\circ L\circ \tilde{h}_2^{-1}.
$$
The homeomorphism $\tilde{h}_2^{-1}\circ h_2$ commutes with
$L$. Therefore Proposition~\ref{prop_translation} implies that
$[\tilde{h}_2^{-1}\circ h_2]=[id_M]$. Therefore
$$
[h_2]=[\tilde{h}_2]=[h_1] \textup{ in } \pi_0(\HH),
$$
which gives us a contradiction. We conclude that $f_1$ and $f_2$
represent different connected components of $\X$.
\end{proof}

\begin{remarks}$ $
\begin{enumerate}
\item It is not clear whether or not there are other connected
components of $\X$ that we are not detecting. {\it Question:} for which $h\in\D$, does the
connected component of $h\circ L$ in $Dif\!f(M)$ contain an
Anosov diffeomorphism?
\item By Moser's homotopy trick the space $\D^{vol}$ consisting of all volume preserving diffeomorphisms homotopic to $id_M$ is a deformation
retraction of $\D$. Hence, we have analogous results for the space
of volume preserving Anosov diffeomorphisms $\X^{vol}$.
\item See page 10 of~\cite{H} for a conjectural geometric description for representatives of non-zero elements in $\mathbb Z_2^\infty < \pi_0(\D)$ of Propositions~\ref{prop_torus1} and~\ref{prop_nil1}.
\end{enumerate}
\end{remarks}

{\bf Acknowledgements.} We would like to thank Pedro Ontaneda, Rafael Potrie and the referee for useful comments which lead to better presentation.

\appendix
\section{Proof of Theorem~\ref{thm_2torus}}
\label{appA}
\subsection{Outline of the proof of Theorem~\ref{thm_2torus}}
\subsubsection{Step 1: the 2-torus}
Let $p_0$ be a fixed point of $L$, $L(p_0)=p_0$. For every point
$p\in \mathbb T^2$ consider the translation $t_p\colon \mathbb T^2\to
\mathbb T^2$ on the (flat) torus that takes $p_0$ to $p$. Then
$L_p\stackrel{\mathrm{def}}{=}t_p\circ L\circ t_p^{-1}$ is an Anosov
automorphism that fixes point $p$. 

Consider the space $\mathcal T^2=\{L_p: p\in\mathbb T^2\}$ equipped with $C^r$ topology. Then it is easy to see that the map $\mathbb T^2\ni p\mapsto L_p\in\mathcal T^2$ is a finite covering and that $\mathcal T^2$ is a 2-torus. Our goal is to show that $\X$ is homotopy equivalent to $\mathcal T^2\subset\X$.

\subsubsection{Step 2: $\X$ has homotopy type of a CW complex}
Recall that the space of all $C^r$ diffeomorphisms of $\mathbb T^2$ --- $Dif\!f(\mathbb T^2)$ --- is a separable infinite dimensional manifold modeled on the Banach space or the Fr\'echet space of $C^r$ vector fields when $r<\infty$ or $r=\infty$, respectively. Hence $Dif\!f(\mathbb T^2)$ is a separable absolute neighborhood retract. Since $\X$ is an open subset of $Dif\!f(\mathbb T^2)$, we also have that $\X$ is a separable absolute neighborhood retract. By a result of W.H.C. Whitehead~\cite{P}, every absolute neighborhood retract has the homotopy type of a CW complex. Therefore $\X$ has homotopy type of a CW complex.

\subsubsection{Step 3: $\X\simeq\mathcal T^2$}
\begin{lemma}
\label{lemma8}
Let $\mathbb D^k$ be
a disk of dimension $k$ and let $\alpha\colon (\mathbb D^k,\partial)\to (\X,L) $ be a
continuous map which sends the boundary of the disk to $L$, \ie $[\alpha]\in \pi_k(\X)$. Then $\alpha$ can be homotoped to $\widehat{\alpha}\colon (\mathbb D^k,\partial)\to
(\mathcal T^2, L)$. 
\end{lemma}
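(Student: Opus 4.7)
The plan is to use parametric structural stability to lift $\alpha$ to a continuous family of conjugacies $h_x\in\HHTT$, read off a continuous family of ``base fixed points'' $p(x):=h_x(p_0)\in\mathbb T^2$ (with $p_0$ a fixed point of $L$), set $\widehat\alpha(x):=t_{p(x)}\circ L\circ t_{p(x)}^{-1}\in\mathcal T^2$, and then construct the desired homotopy from $\alpha$ to $\widehat\alpha$ inside $\X$ by deforming the conjugacy data.

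First, parametric structural stability (local uniqueness near $L$ plus a monodromy argument over the simply-connected disk, using the boundary normalization $h_x=\Id$ on $\partial\mathbb D^k$ to kill the finite centralizer ambiguity --- cf.\ Proposition~\ref{prop_translation}) yields the family $h_x$ with $\alpha(x)=h_x\circ L\circ h_x^{-1}$. Setting $p(x):=h_x(p_0)$ and $\widehat\alpha(x):=L_{p(x)}$ gives the target map $(\mathbb D^k,\partial)\to(\mathcal T^2,L)$. The homeomorphism $g_x:=h_x\circ t_{p(x)}^{-1}\in\HHTT$ fixes $p(x)$ and satisfies $g_x\circ L_{p(x)}\circ g_x^{-1}=\alpha(x)$.

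The main obstacle is to homotope $\alpha$ to $\widehat\alpha$ inside $\X$. Were $g_x$ a continuous family of smooth diffeomorphisms in $\mathrm{Diff}_0(\mathbb T^2)$ fixing $p(x)$, a smooth basepoint-preserving isotopy $g_x^s$ from $\Id$ to $g_x$ would yield the homotopy directly via $H(x,s):=g_x^s\circ L_{p(x)}\circ(g_x^s)^{-1}$, since smooth conjugates of Anosov diffeomorphisms are Anosov. For the actual only-continuous $g_x$, I would replace it by a smooth approximation $\widetilde g_x\in\mathrm{Diff}_0(\mathbb T^2)$ using that in dimension $2$ the inclusion $\mathrm{Diff}_0(\mathbb T^2)\hookrightarrow\HHTT$ is a weak equivalence (classical $2$-dimensional smoothing combined with Earle--Eells, $\mathrm{Diff}_0(\mathbb T^2)\simeq\mathbb T^2$), then concatenate the resulting smooth conjugation homotopy with a straight-line interpolation on the universal cover between $\widetilde g_x\circ L_{p(x)}\circ\widetilde g_x^{-1}$ and $\alpha(x)$ inside the $C^r$-open set $\X$.

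The technical heart of the argument is to choose $\widetilde g_x$ continuously in $x$ and close enough to $g_x$ in a strong enough topology that the straight-line segment stays inside the Anosov open set, uniformly in $x\in\mathbb D^k$ and compatibly with the boundary condition $g_x=\Id$ on $\partial\mathbb D^k$. Compactness of $\mathbb D^k$ and parametric mollification on the universal cover should handle this, but the quantitative interplay between smoothing and openness of the Anosov condition is where the real work lies.
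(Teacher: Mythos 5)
Your first two steps (parametric structural stability plus the boundary normalization to pin down the conjugacy $h_x$, and the definition of the base-point curve $p(x)=h_x(p_0)$ and of $\widehat\alpha(x)=L_{p(x)}$) coincide with the paper's set-up, so that part is fine. The gap is in the final step, and it is not a quantitative issue that ``more careful mollification'' can repair: it is the whole content of the lemma.

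The conjugacy $g_x=h_x\circ t_{p(x)}^{-1}$ produced by structural stability is in general only H\"older continuous, and this is forced by dynamics, not by a lack of regularity of the construction. A smooth diffeomorphism $\widetilde g_x$ that is $C^0$-close to $g_x$ produces a conjugate $\widetilde g_x\circ L_{p(x)}\circ\widetilde g_x^{-1}$ whose derivative eigenvalues along all periodic orbits equal those of $L_{p(x)}$, because periodic data is a smooth-conjugacy invariant. The diffeomorphism $\alpha(x)$, on the other hand, generically has completely different periodic data. Consequently $\widetilde g_x\circ L_{p(x)}\circ\widetilde g_x^{-1}$ and $\alpha(x)$ are \emph{not} $C^1$-close, no matter how well $\widetilde g_x$ approximates $g_x$ in the $C^0$ topology (and $C^0$ is the best one can hope for, since $g_x$ is typically nowhere differentiable). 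Your straight-line interpolation on the universal cover therefore joins two points of $\X$ that may be far apart in $C^1$, and there is no reason for the segment to remain in the open set of Anosov diffeomorphisms. The weak equivalence $\mathrm{Diff}_0(\mathbb T^2)\hookrightarrow\HHTT$ lets you approximate a continuous family of homeomorphisms by a continuous family of diffeomorphisms in the compact-open sense, but it says nothing about control of the conjugated dynamics in the $C^1$ topology, which is what you would need.

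This is precisely why the paper's proof is so involved. It does not try to smooth the conjugacy; instead it constructs, for each $x$, an explicit path $f_t$ \emph{inside the space of Anosov diffeomorphisms} from $f=\alpha(x)$ to a diffeomorphism $f_p$ whose stable and unstable logarithmic Jacobians are cohomologous to constants. This is done by deforming the potential $\varphi_t=(1-2t)\varphi^u(f)$ and using the associated system of leafwise equilibrium-state measures to produce the reparametrizing homeomorphisms $h_t$; the properties (P2) and (P3) are what guarantee that $f_t=h_t\circ f\circ h_t^{-1}$ stays $C^{1+}$ and Anosov throughout. Only after both Jacobian classes have been made trivial does one invoke de la Llave's rigidity theorem~\cite{dlL} to conclude that $f_p$ is $C^{1+}$ conjugate to the linear model, and only at that point is the Earle--Eells retraction of $\mathrm{Diff}_0(\mathbb T^2)$ onto $\mathbb T^2$ applicable. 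In short, you cannot get from $\alpha(x)$ to the smooth conjugacy class of $L_{p(x)}$ by approximation alone; you must first continuously destroy the periodic-data obstruction while remaining Anosov, and that is the heart of the argument that your proposal omits.
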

This lemma implies that the inclusion $\mathcal T^2\hookrightarrow \X$ induces an epimorphism on homotopy groups. Therefore, since $\mathcal T^2$ is aspherical, $\pi_k(\mathcal T^2)\to \pi_k(\X)$ is a trivial isomorphism for $k\ge 2$. The homomorphism $\pi_1(\mathcal T^2)\to\pi_1(\X)$ is monic and, hence, is an isomorphism as well. The fact that the homomorphism $\pi_1(\mathcal T^2)\to\pi_1(\X)$ is monic can be deduced from structural stability and the fact that $\pi_1(\mathbb T^2)\to\pi_1(\HHTT)$ is monic. Now, since $\X$ has homotopy type of a CW complex, Theorem~\ref{thm_2torus} follows from J.H.C. Whitehead's Theorem.

We proceed with preparations to and then a sketch of the proof of Lemma~\ref{lemma8}. 
\subsection{Convention.} When we say that
an object is $C^{1+}$ we mean that it is $C^1$ and the first
derivative is H\"{o}lder continuous with some positive exponent.

\subsection{Equilibrium states for Anosov diffeomorphisms on $\mathbb T^2$.}
Let $g\colon \mathbb T^2\to \mathbb T^2$ be an Anosov diffeomorphism. We denote by
$W^s(g)$ and $W^u(g)$ the stable and unstable foliations of $g$. We
assume that a background Riemannian metric $a$ is fixed.
The logarithms of stable and unstable jacobians of $g$ will be
denoted by $\varphi^s(g)$ and $\varphi^u(g)$. 

Two H\"{o}lder
continuous functions $\varphi_1, \varphi_2\colon \mathbb T^2\to
\mathbb R$ are called {\it cohomologous up to an additive constant over $g$} if there exist a
constant $C$ and a H\"{o}lder continuous function $u\colon\mathbb
T^2\to \mathbb R$ such that $\varphi_1 = \varphi_2+C+u\circ g-u$. In
this case we write
$\langle\varphi_1\rangle=\langle\varphi_2\rangle$.  We remark that even though  $\varphi^s(g)$ and $\varphi^u(g)$ depend on the chosen Riemannian metric $a$, the cohomology classes  $\langle\varphi^s(g)\rangle$ and $\langle\varphi^u(g)\rangle$ are independent of the choice of $a$.

Let $T$ be a transveral to the unstable foliation $W^u(g)$. For each $y\in T$ consider a finite arc $V^u(y)\subset W^u(g)(y)$ that intersect $T$ at $y$. Assume that $V^u(y)$ vary continuously with $y$. Then  given  a H\"{o}lder continuous potential $\varphi\colon\mathbb T^2\to\mathbb R$ where
exists a unique system of measures $\{\mu_y^{\varphi}, y\in T\}$,
satisfying the following properties.
\begin{enumerate}
\item[(P1)] $\mu_y^{\varphi}, y\in T$, are finite measures
supported on $V^u(y)$; \vspace{0.3cm}
\item[(P2)] If for $z\in V^u(y)$ the point $f(z)\in V^u(\bar y)$ then $\dfrac{d(f^*\mu_{\bar y}^\varphi)}{d\mu_y^\varphi}=
e^{\varphi(z)-P(\varphi)}$; here $P(\varphi)$ is the pressure of
$\varphi$;\vspace{.2cm}
\item[(P3)] the system $\{\mu_y^\varphi\}$ satisfies certain absolute continuity property with respect to the stable foliation $W^s(f)$.
\end{enumerate}
Measures $\mu_y^\varphi$ are equivalent to the conditional measures
on $V^u(y)$ of the equilibrium state of $\varphi$. If
$\varphi=\varphi^u(g)$ then $\mu_y^\varphi$ are absolutely
continuous measures induced by the Riemannian metric $a$.
For more details about the system $\{\mu_y^\varphi\}$ and the proof
of existence and uniqueness, see, \eg~\cite{L}.

\subsection{Sketch of the proof of Lemma~\ref{lemma8}}
Let $\xi(x_0)=id_{\mathbb T^2}$ for some $x_0\in\partial \mathbb D^k$. Then structural stability gives the continuation $\xi\colon \mathbb D^k\to
\HHTT$ to the rest of the disk of the conjugacy to the linear
model:
$$
\alpha (\cdot)\circ \xi(\cdot)=\xi(\cdot)\circ L.
$$
If $k\ge 2$ then, by local uniqueness part of structural stability, $\xi(x)=id_{\mathbb T^2}$ for $x\in\partial\mathbb D^k$.
In this case we define the continuation of the
fixed point $p_0$ to the interior of the disk by the formula $fix(\cdot)\stackrel{\mathrm{def}}{=}\xi(\cdot)(p_0)$. Then clearly $\alpha (\cdot)(
fix(\cdot))=fix(\cdot)$. If $k=1$ then $\xi$ is $id_{\mathbb T^2}$ at the endpoint $x_0$ and a possibly non-trivial translation at the other endpoint. In this case, $fix$ defined as before gives a path that connects $p_0$ with a fixed point of $L$. We shall explain how to construct a homotopy that connects $\alpha$ to $\widehat{\alpha}\stackrel{\mathrm{def}}{=}L_{fix(\cdot)}$. (Note that $L_{fix(\cdot)}$ is always a loop, even when $fix$ is a path from one fixed point to another fixed point.)

\begin{remark}
 From now on we will assume that $\alpha$ consists of $C^{1+}$ diffeomorphisms. In case when $\X$ is the space of $C^1$ Anosov diffeomorphisms we replace given $\alpha$ with a $C^{1+}$ version by approximating and performing a short homotopy. 
\end{remark}

Take $x\in \mathbb D^k$ and let $f\stackrel{\mathrm{def}}{=}\alpha(x)$. Point $p\stackrel{\mathrm{def}}{=}fix(x)$ is a fixed point of $f$. Next we
construct a path of diffeomorphisms that connects $f$ and $f_p$, where $f_p$ is a $C^{1+}$ Anosov
diffeomorphism $C^{1+}$ conjugate to $L_p$. The path will consist of
Anosov diffeomorphisms of regularity $C^{1+}$ that fix $p$.

Choose a simple closed curve $T$ which is transverse to $W^u(f)$ and passes
through $p$. Transversal $T$ cuts the leaves of the unstable foliation $W^u(f)$ into
oriented arcs $[y, e(y)]$ parametrized by $y\in T$.

Fix a background Riemannian metric $a$. Consider the path of potentials $\varphi_t\stackrel{\mathrm{def}}{=}(1-2t)\varphi^u(f)$,
$t\in [0, 1/2]$ and corresponding system of measures
$\mu_y^{\varphi_t}$ on $[y, e(y)]$ as described in Subsection A.3. These measures depends continuously on $t$ (see, \eg~\cite{C}).

Now we can define a $C^{1+}$ path of Anosov diffeomorphisms whose
logarithmic unstable jacobians are cohomologous up to a constant to
$\varphi_t$. This is done in the following way.

Consider the functions $\eta_t\colon T\to \mathbb R$ given by $\eta_t(y)=\mu_y^{\varphi_t}([y, e(y)])$.
Choose a continuous family of Riemannian metrics $a_t$, $t\in [0,
1/2]$, such that  $a_0 = a$ and the induced lengths $l_y^t([y, e(y)])$
of the arcs $[y, e(y)]$ in the metric $a_t$ equal to $\eta_t(y)$.
Consider the family of homeomorphisms $h_t\colon\mathbb T^2\to\mathbb T^2$ that preserve the partition by
the arcs $[y, e(y)]$, $y\in T$, and satisfy the following relation
$$
\mu_y^{\varphi_t}([y, z])=l_y^t([y, h_t(z)]), z\in [y, e(y)].
$$
Clearly, the family of homeomorphisms $h_t$ is uniquely determined. Because $\varphi_0=\varphi^u(f)$ homeomorphism $h_0=id_{\mathbb T^2}$.
Define
$$
f_t \stackrel{\mathrm{def}}{=}h_t\circ f\circ h_t^{-1}, t\in [0, 1/2].
$$
Then $f_0=f$. One can use property (P2) to check that $f_t$, $t\in [0, 1/2]$,
are $C^{1+}$-differentiable along $W^u(f)$. Also one can use property (P3) to check that $h_t(W^s(f))$ are $C^{1+}$ foliations for $t\in[0,1/2]$. Therefore $f_t$, $t\in [0, 1/2]$, are $C^{1+}$ Anosov diffeomorphisms with
$\langle\varphi^u(f_t)\rangle=\langle\varphi_t\circ h_t^{-1}\rangle$
over $f_t$. Because the stable foliation $W^s(f)$ is $C^{1+}$, it follows that
$\langle\varphi^s(f_t)\rangle = \langle\varphi^s(f)\circ
h_t^{-1}\rangle$, $t\in [0, 1/2]$.

Now we switch the roles of the stable and the unstable foliations and apply the same construction to $f_{1/2}$ to get a path $f_t$, $t\in[1/2, 1]$,
connecting $f_{1/2}$ to $f_1$. Then
$\langle\varphi^s(f_1)\rangle=\langle0\rangle$,
$\langle\varphi^s(f_1)\rangle=\langle\varphi^s(f_{1/2})\rangle=\langle0\rangle$. Then it follows that
$f_p\stackrel{\mathrm{def}}{=}f_1$ is $C^{1+}$ conjugate to $L_p$ by~\cite{dlL}.

It is routine to check that the construction outlined above can be
carried out simultaneously for all $\alpha(x)$, $x\in\mathbb D^k$, and that the resulting homotopy can be made to be constant on $\partial \mathbb D^k$. The
choices of transversals $T=T(x)$ and families of Riemannian metrics $a_t=a_t(x)$
must be made continuously in $x$ to make sure that a (continuous) homotopy
of $\alpha$ is produced. This homotopy connects $\alpha$ and
$\Bar{\alpha}\colon (\mathbb D^k,\partial) \to (\textup{Diff}^{1+}(\mathbb T^2), L)$ whose
image lies in $C^{1+}$ conjugacy class of $L$. Using standard
smoothing methods we can $C^1$ approximate our homotopy by another
one that takes values in $\X$ and connects $\alpha$ to
$\widetilde{\alpha}\colon (\mathbb D^k,\partial) \to (\X,L)$. 

The map $\Bar\alpha$ can be $C^1$ approximated by a map whose image lies in the $C^r$ conjugacy class of $L$ simply by approximating $C^{1+}$ conjugacy with a $C^r$ conjugacy. This map is $C^1$ close to $\widetilde\alpha$ and hence, by performing a short homotopy if needed, we can assume that the map $\widetilde\alpha$ is $C^r$ conjugate to $L$.

Finally, map
$\widetilde{\alpha}$ can be homotoped to the map
$\widehat{\alpha}\colon (\mathbb D^k, \partial) \to (\mathcal T^2, L)$ by homotoping
corresponding map $h\colon \mathbb D^k\to \D$, $h(\cdot)\circ L\circ
h^{-1}(\cdot)=\widetilde{\alpha}(\cdot)$, in the space of
$C^r$ conjugacies to a map consisting of the translations $t\colon \mathbb D^k\to \D$ given by $t(x)=t_{fix(\widetilde{\alpha}(x))}=t_{fix(\alpha(x))}$. The latter is
possible due to a result of Earle and Eells \cite{EE} who showed
that $\mathbb T^2=\{t_p, p\in \mathbb T^2\}$ is a deformation
retraction of the space of smooth conjugacies $Dif\!f_0(\mathbb T^2)$.

\section{Proofs of Propositions~\ref{prop_torus2} and~\ref{prop_nil2}}
\label{appB}
\begin{remark} We write $k\ll n$ for $3k+7<n$; in particular, $2p-4 \ll n$ if and only if $6p-5<n$.
 \end{remark}

\begin{proof}[Proof of Proposition~\ref{prop_nil2}]
Consider the following commutative diagram
$$
\begin{CD}
\Ps(T)@>>>\Ps(M)@>t>>\D \\
@VVV@VVV@VVV\\
\P(T)@>>>\P(M)@>t>>\HH
\end{CD}
\hspace{3cm}(\star)
$$
where $T$ is a closed tubular neighborhood of a smooth simple closed curve $\alpha$ in $M$ such that the homology class represented by $\alpha$ generates an infinite cyclic direct summand of $H_1(M)$.
\begin{remark}
If $M$ is orientable then $T=S^1\times\mathbb D^{n-1}$. In general it is the mapping torus of a self-diffeomorphism of $\mathbb D^{n-1}$.
\end{remark}
In this diagram $\Ps(\cdot)$ and $\P(\cdot)$ are the smooth and topological pseudo-isotopy functors, respectively. Recall that a topological (smooth) pseudo-isotopy of a compact manifold $M$ is a homeomorphism (diffeomorphism)
$$
f\colon M\times [0,1]\to M\times [0,1]
$$
such that $f(x)=x$ for all $x\in M\times 0\cup\partial M\times[0,1]$. Then $\P(M)$ ($\Ps(M)$) is the topological space consisting of all such homeomorphisms (diffeomorphisms), respectively. Since $T\subset M$ is a codimension 0 submanifold, a pseudo-isotopy $f$ of $T$ canonically induces a pseudo-isotopy $F$ of $M$ by setting $F(x)=f(x)$,
when $x\in T\times[0,1]$, and $F(x)=x$ otherwise.

Note that the pseudo-isotopy $f$ must map $M\times 1$ into itself. Hence, after identifying $M$ with $M\times 1$ in the obvious way, the restriction of $f$ to $M\times 1$ determines an element in $\HH$ or $\D$ depending on whether $f\in\P(M)$ or $\Ps(M)$. This restriction gives the maps $t$ (standing for top) in diagram~$(\star)$.

Now Proposition~\ref{prop_nil2} is clearly implied by the following Assertion.
\begin{ass}
Let $M$ be an $n$-dimensional infranilmanifold and $p$ be a prime number different from 2. Assume
that the first Betti number of $M$ is non-zero and that $n>6p-5$. Then there exists a subgroup $S$ of $\pi_{2p-4}(\Ps(T))$ such that
\begin{enumerate}
 \item $S\simeq \mathbb Z_p^\infty $ and
\item $S$ maps into $\pi_{2p-4}(\HH)$ with a finite kernel via the homomorphism which is functorially induced by the maps in the commutative diagram~$(\star)$.
\end{enumerate}
\end{ass}
This Assertion will be proven by concatenating several facts which we now list.
\begin{fact}
\label{fact1}
 The kernel of the homomorphism $\pi_k\Ps(T)\to\pi_k\P(T)$ is a finitely generated abelian group provided $k\ll n$.
\end{fact}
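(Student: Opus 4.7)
The plan is to use the long exact sequence of homotopy groups associated to the forgetful map $\Ps(T)\to\P(T)$. If $F$ denotes its homotopy fiber, then the exact sequence
\begin{equation*}
\pi_k F\to\pi_k\Ps(T)\to\pi_k\P(T)
\end{equation*}
identifies the kernel in question with a quotient of $\pi_kF$. It therefore suffices to prove that $\pi_kF$ is a finitely generated abelian group when $k\ll n$. Note that $\pi_k\Ps(T)$ itself cannot be expected to be finitely generated, since the Hatcher--Wagoner--Igusa formula (with the correction from~\cite{I84}) produces a summand $\mathbb Z_2^\infty$ already at $k=0$ coming from conjugacy classes in $\pi_1(T)=\mathbb Z$; this is the very phenomenon exploited elsewhere in the paper, so the kernel-level statement is genuinely the correct one.

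The second step is to identify $F$. By smoothing theory in its pseudo-isotopy guise --- either the Burghelea--Lashof--Rothenberg product formula or the more recent Waldhausen--Jahren--Rognes smoothing theorem --- in the stable range $3k+7<n$ the fiber $F$ is homotopy equivalent to a space of smoothings of a topological pseudo-isotopy of $T$ rel boundary. By Kirby--Siebenmann smoothing classification this space is in turn a section space of a bundle over $T\times I$ with fiber $Top/O$.

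The third step is the finite-generation count. The homotopy groups $\pi_j(Top/O)$ are finitely generated in each degree: they are assembled out of the finite Kirby--Siebenmann group $\pi_3(Top/PL)=\mathbb Z_2$ and the finite Kervaire--Milnor groups $\Gamma_{j+1}$ of exotic spheres. Since $T$ is a compact manifold with finitely generated integral cohomology, a routine obstruction-theoretic argument (equivalently, an Atiyah--Hirzebruch spectral sequence computation) shows that $\pi_k$ of any such section space is finitely generated. Feeding this into the first step gives the conclusion.

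The main obstacle I expect is the identification of $F$ itself: one must pick a concrete pseudo-isotopy model, verify that the stable range in which the smoothing-theoretic identification is valid actually contains Igusa's range $3k+7<n$, and reconcile the Kirby--Siebenmann/Burghelea--Lashof conventions with the Hatcher--Wagoner--Igusa conventions governing $\Ps$. Once those identifications are pinned down, the finite-generation conclusion is essentially formal.
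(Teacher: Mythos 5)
The paper's proof is a one-line citation to Corollary~4.2 of Farrell--Ontaneda~\cite{FO}, which establishes precisely that the comparison map from smooth to topological pseudo-isotopies has finitely generated kernel on $\pi_k$ in the stable range. Your proposal reconstructs, at the level of ideas, the argument underlying that corollary: pass to the homotopy fiber $F$ of $\Ps(T)\to\P(T)$; identify $F$ in the stable range via concordance-smoothing theory (Burghelea--Lashof, Waldhausen--Jahren--Rognes) with a section space of a bundle over $T\times I$ built from $Top/O$; and deduce finite generation from the finiteness of $\pi_*(Top/O)$ --- indeed both $\pi_*(Top/PL)$ and $\pi_*(PL/O)$ (the latter essentially the Kervaire--Milnor groups) are finite, so the section space in fact has finite homotopy groups, and any quotient of $\pi_k F$ inherits this. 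The scheme is sound, and the obstacle you flag --- pinning down the precise model for $F$ and reconciling the stable range of the smoothing identification with Igusa's range $3k+7<n$ --- is genuine, but it is exactly the content that the citation to~\cite{FO} supplies. So this is the same approach as the one behind the paper's reference, executed one level further down; the paper simply outsources the technical fiber identification rather than reproving it.
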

This fact follows from Corollary~4.2 in~\cite{FO}.
\begin{fact}
\label{fact2}
Denote the inclusion map $T\subset M$ by $\sigma$. Then the induced homomorphism $\pi_k\P(\sigma)\colon\pi_k\P(T)\to\pi_k\P(M)$ is monic provided $k\ll n$.
\end{fact}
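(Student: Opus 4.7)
The plan is to realize $T$ as a homotopy retract of $M$ and then invoke homotopy invariance of stable topological pseudo-isotopy in the range $3k+7<n$.

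For the first step I would construct a homotopy retraction $\rho\colon M\to T$ of the inclusion $\sigma$. Since every infranilmanifold is aspherical, $M\simeq K(\pi_1(M),1)$. The hypothesis that $[\alpha]$ generates an infinite cyclic direct summand of $H_1(M;\mathbb Z)$ supplies a splitting projection $H_1(M;\mathbb Z)\twoheadrightarrow\mathbb Z$ sending $[\alpha]$ to a generator; composed with the Hurewicz map this produces a surjection $\pi_1(M)\twoheadrightarrow\mathbb Z$, which by asphericity is realized by a map $r\colon M\to S^1$. Composing $r$ with the inclusion $S^1\hookrightarrow T$ of the core circle of the tubular neighborhood yields $\rho$. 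The composition $\rho\circ\sigma\colon T\to T$ then induces the identity on $\pi_1(T)\cong\mathbb Z$ and hence is homotopic to $\Id_T$, because $T$ deformation retracts onto its core circle and $S^1$ is aspherical.

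Next I would convert this homotopy retraction into a retraction on pseudo-isotopy. In the stable range $k\ll n$, the Hatcher-Igusa type stability theorem in the topological category (Burghelea-Lashof-Rothenberg, Hsiang-Jahren) identifies $\pi_k\P(N)$, for an $n$-manifold $N$, with the $k$-th homotopy group of the stable topological pseudo-isotopy space $\mathcal P^{\mathrm{top}}(N)=\mathrm{colim}_j\,\P(N\times I^j)$, and the latter is a homotopy invariant of $N$ --- as is also visible from the corrected Hatcher-Igusa formula expressing it in terms of $\mathbb Z/2$-coinvariants of Whitehead groups of $\pi_1(N)$. Applying homotopy functoriality to $\rho\circ\sigma\simeq\Id_T$ then produces a left inverse
$$
\pi_k\P(\rho)\colon \pi_k\P(M)\to\pi_k\P(T)
$$
of $\pi_k\P(\sigma)$, from which the desired injectivity follows.

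The hard part will be to invoke the homotopy invariance of stable topological pseudo-isotopy in this range cleanly, since $\P$ itself is a priori functorial only with respect to codimension-zero embeddings, whereas $\rho$ is merely a continuous map. At the level of the Hatcher-Igusa formula the key algebraic input is exactly that the inclusion $\pi_1(T)\hookrightarrow\pi_1(M)$ admits a retraction --- precisely what the direct summand hypothesis on $[\alpha]$ delivers --- so the induced map on the relevant $K$-theory of the integral group ring splits, giving injectivity in the stable range.
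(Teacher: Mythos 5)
Your proof takes essentially the same route as the paper: the direct-summand hypothesis on $[\alpha]$ gives a map $M\to S^1$ exhibiting $T\hookrightarrow M$ as a homotopy retract, and since the \emph{stable} topological pseudo-isotopy functor is a homotopy functor, the induced map on $\pi_k$ of the stable theory splits; Igusa's stability theorem then transports the injectivity back to $\pi_k\P$ in the range $k\ll n$. The paper phrases this by factoring $S^1\to T\to M\to S^1$ and applying $\PP(\cdot)$ rather than building a retraction $\rho\colon M\to T$ directly, but the content is identical.
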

Fact~\ref{fact2} is proven as follows. Since the class of $\alpha$ in $H_1(M)$ generates an infinite cyclic direct summand, there clearly exists a continuous map $\gamma\colon M\to S^1$ such that the composition
$$
S^1\stackrel{\alpha}{\to}T\stackrel{\sigma}{\to}M\stackrel{\gamma}{\to}S^1
$$
is homotopic to $id_{S^1}$. Let $\PP(\cdot)$ denote the stable topological pseudo-isotopy functor. Applying  $\PP(\cdot)$ to the above composition yields that
$$
\pi_k\PP(\sigma)\colon\pi_k\PP(T)\to\pi_k\PP(M)
$$
is monic since $\PP(\cdot)$ is a homotopy functor; cf.~\cite{H}. Therefore Igusa's stability result~\cite{I} completes the proof of Fact~2.

There is an involution ``\---'' defined on $\P(M)$ which is essentially determined by ``turning a pseudo-isotopy upside down.'' See pages~6 and~18 of~\cite{H} for a precise definition. (Also see page 298 of~\cite{FO}.) Since $\sigma$ commutes with ``\---'', it induces a homomorphism
$$
H_0(\mathbb Z_2, \pi_k\P(T))\to H_0(\mathbb Z_2,\pi_k\P(M)).
$$ 
\begin{fact}\label{fact3}
This homomorphism is monic provided $k\ll n$.
\end{fact}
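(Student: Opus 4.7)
The plan is to promote the splitting used to prove Fact~\ref{fact2} to the level of $\mathbb Z_2$-coinvariants. The key observation is that the left inverse of $\pi_k\P(\sigma)$ constructed implicitly in the proof of Fact~\ref{fact2} is built entirely out of functorial maps on the stable pseudo-isotopy functor $\PP(\cdot)$, and the involution on $\P(\cdot)$ extends to a natural transformation on $\PP(\cdot)$; hence the splitting will descend to $H_0(\mathbb Z_2,-)$.

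In detail, I would first invoke Igusa's stability theorem~\cite{I} to identify $\pi_k\P(\cdot)$ with $\pi_k\PP(\cdot)$ in the range $k\ll n$; this identification is $\mathbb Z_2$-equivariant because stabilization (essentially crossing pseudo-isotopies with an interval) commutes with turning them upside down. Since $T$ is a closed tubular neighborhood of $\alpha$, the inclusion $\alpha\colon S^1\hookrightarrow T$ is a homotopy equivalence, so $\pi_k\PP(\alpha)$ is an isomorphism. The homotopy $\gamma\circ\sigma\circ\alpha\simeq\mathrm{id}_{S^1}$ already used in the proof of Fact~\ref{fact2} then yields
$$
\pi_k\PP(\alpha)\circ\pi_k\PP(\gamma)\circ\pi_k\PP(\sigma)=\mathrm{id}_{\pi_k\PP(T)},
$$
exhibiting $\pi_k\PP(\sigma)$ as a split monomorphism with left inverse $\pi_k\PP(\alpha)\circ\pi_k\PP(\gamma)$.

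Granted that the involution is natural with respect to all homotopy-functorial maps on $\PP(\cdot)$, each of $\pi_k\PP(\alpha)$, $\pi_k\PP(\gamma)$ and $\pi_k\PP(\sigma)$ commutes with it, so passing to $\mathbb Z_2$-coinvariants preserves the splitting and produces a split monomorphism
$$
H_0(\mathbb Z_2,\pi_k\PP(T))\hookrightarrow H_0(\mathbb Z_2,\pi_k\PP(M)).
$$
Transporting this back through the involution-equivariant Igusa isomorphism will then establish Fact~\ref{fact3}.

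The main technical obstacle is verifying naturality of the involution for the homotopy-functorial map $\pi_k\PP(\gamma)$: unlike $\sigma$, the map $\gamma\colon M\to S^1$ is not a codimension-$0$ embedding, so equivariance is not immediate from the geometric ``upside-down'' definition of the involution on $\P(\cdot)$ recalled on page~6 of~\cite{H}. The intended resolution is to cite the fact (or verify it directly by tracing through stabilization) that the involution extends to a natural transformation on the homotopy functor $\PP(\cdot)$, as is implicit in the discussion on page~298 of~\cite{FO}.
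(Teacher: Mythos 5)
Your proposal is essentially the paper's intended argument: the paper's entire proof of Fact~3 is the single sentence that it ``is proven by an argument similar to that given for Fact~\ref{fact2},'' and what you have done is unwind that — take the same $\gamma\circ\sigma\circ\alpha\simeq id_{S^1}$ splitting used for Fact~\ref{fact2}, observe that it gives a $\mathbb Z_2$-equivariantly \emph{split} monomorphism $\pi_k\PP(\sigma)$ (crucial, since $H_0(\mathbb Z_2,-)$ is only right exact and would not preserve a mere monomorphism), pass to coinvariants, and transport back through the involution-compatible stability map. You also correctly flag the one genuine technical point that the terse ``similar argument'' sweeps under the rug: one needs the involution on $\PP(\cdot)$ to be natural with respect to arbitrary homotopy-functorial maps such as $\PP(\gamma)$, not merely codimension-$0$ inclusions, and to commute with stabilization. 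Both are established in the literature the paper cites in the surrounding discussion (see~\cite{HKVWW} for the canonical involution's naturality, and pages~6, 18 of~\cite{H} and page~298 of~\cite{FO} for the compatibility of the geometric involution with stabilization), so the remaining step you describe is a citation rather than an open gap.
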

Fact~\ref{fact3} is proven by an argument similar to that given for Fact~\ref{fact2}
\begin{fact}\label{fact4}
If $k\ll n$, then $\pi_k\P(t)\colon\pi_k\P(M)\to\pi_k\HH$ factors through a homomorphism
$$
\varphi\colon H_0(\mathbb Z_2;\pi_k\P(M))\to\pi_k\HH,
$$
whose kernel contains only elements of order a power of 2.
\end{fact}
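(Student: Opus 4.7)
The plan is to establish Fact~\ref{fact4} in two stages: first, verify that $\pi_k\P(t)$ vanishes on every element of the form $c+\bar c$, so that it descends to a homomorphism $\varphi$ out of the coinvariants $H_0(\mathbb Z_2;\pi_k\P(M))$; second, use Hatcher's pseudo-isotopy machinery together with Igusa's stability theorem to show that $\ker\varphi$ is annihilated by a power of $2$ in the stable range $k\ll n$.

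For the factorization, I would unwind the defining property of the involution (see pages~6 and~18 of~\cite{H} and page~298 of~\cite{FO}). Setting $f_1\stackrel{\mathrm{def}}{=}t(f)$ and $\sigma(x,s)\stackrel{\mathrm{def}}{=}(x,1-s)$, the relevant model formula is $\bar f=(f_1^{-1}\times\Id_I)\circ\sigma\circ f\circ\sigma$, and a direct check yields $\bar f|_{M\times 0}=\Id_M$ and $t(\bar f)=f_1^{-1}=t(f)^{-1}$. Since $\pi_k\HH$ is abelian for $k\ge 1$ (with the $k=0$ case handled by conjugation-invariance inside the group), inversion of group elements corresponds to negation of homotopy classes, so $\pi_k\P(t)(\bar c)=-\pi_k\P(t)(c)$. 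Consequently $\pi_k\P(t)(c+\bar c)=0$ for every $c$, yielding the factorization through the coinvariants.

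For the kernel estimate, I would first invoke Igusa's stability theorem \cite{I} to identify $\pi_k\P(M)$ with the stable group $\pi_k\PP(M)$ in the range $k\ll n$. In this stable range Hatcher's analysis in \cite{H} produces an exact sequence governed by the $\mathbb Z_2$-action of the involution which identifies $\ker\varphi$ with a subquotient of a Tate cohomology group of $\mathbb Z_2$ with coefficients in $\pi_k\P(M)$; equivalently, after inverting $2$ the averaging projector splits $\pi_k\P(M)$ into $\pm$-eigenspaces of the involution and forces injectivity of $\varphi$. Either way $\ker\varphi$ is annihilated by a power of $2$, which is the required conclusion.

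The hardest part of the argument will be assembling the correct form of Hatcher's exact sequence in the topological rather than smooth category and checking that the various identifications remain valid throughout the range $k\ll n$. This is the same translation that is used implicitly in \cite{HS} (compare footnote~$(i)$ on page~401 there), and it should go through without new difficulties once one combines Hatcher's fibration-theoretic setup from \cite{H} with Igusa's stability theorem \cite{I}.
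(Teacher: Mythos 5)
The factorization half of your argument is essentially correct: the explicit model $\bar f=(f_1^{-1}\times\Id_I)\circ\sigma\circ f\circ\sigma$ and the computation $t(\bar f)=t(f)^{-1}$ do show that $\pi_k\P(t)$ sends $c+\bar c$ to zero and hence descends to the coinvariants $H_0(\mathbb Z_2;\pi_k\P(M))$. That part is fine.

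The kernel estimate, however, has a genuine gap. You invoke ``Hatcher's analysis'' to produce an exact sequence identifying $\ker\varphi$ as $2$-torsion, but you never supply the ingredient that actually makes the pseudo-isotopy machinery compute $\pi_k\HH$ in the first place, namely topological rigidity for infranilmanifolds (Farrell--Hsiang, \cite{FH}). Hatcher's spectral sequence on pages~6--7 of~\cite{H} converges to the homotopy of the fiber of $\HH$ inside the corresponding block homeomorphism space, not to $\pi_k\HH$ itself. Without knowing that the Borel conjecture holds for $M$ (which is exactly what~\cite{FH} provides for infranilmanifolds), one has no control over the block side, and the spectral sequence tells you nothing about the image of $\pi_k\P(M)$ in $\pi_k\HH$, let alone the order of the kernel. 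Your appeal to ``averaging after inverting $2$'' is a statement about the hypothetical exact sequence, not a substitute for producing it. The paper's proof makes this dependence explicit: Hatcher's spectral sequence plus topological rigidity of infranilmanifolds, following the template of Corollary~5 in Section~5 of~\cite{Far}. You should also note that Igusa's stability theorem, which you invoke, concerns smooth pseudo-isotopy; Fact~\ref{fact4} is a statement about the topological pseudo-isotopy space $\P(M)$ and about $\HH$, so citing~\cite{I} here is off target -- the relevant stabilization/convergence in the range $k\ll n$ is handled within Hatcher's topological setup, not by Igusa's smooth theorem.
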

Fact~\ref{fact4} follows from Hatcher's spectral sequence (see pages 6 and 7 of~\cite{H}) by using that topological rigidity holds for all infranilmanifolds (proven in~\cite{FH}). The argument is a straightforward extension of the one proving Corollary~5 in Section 5 of~\cite{Far}.
\begin{fact}
\label{fact5}
There is a subgroup $S$ of $\pi_k\Ps(T)$, where $k=2p-4\ll n$, satisfying 
\begin{enumerate}
 \item $S\simeq\mathbb Z_p^\infty$ and
\item both $x\mapsto x+\bar x$ and $x\mapsto x-\bar x$ are monomorphisms of $S$ into $\pi_k\Ps(T)$.
\end{enumerate}
\end{fact}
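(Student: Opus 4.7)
The plan is to combine Igusa's stability theorem with the higher analog of the Hatcher--Wagoner formula used in the proof of Proposition~\ref{prop_nil1}, and then exploit the involution structure on the resulting $\mathbb{Z}_p^\infty$ summand. First I would pass from $\Ps(T)$ to the stable pseudo-isotopy space $\PP(T)$ using Igusa's stability theorem~\cite{I}; this is legitimate since $2p-4 \ll n$ by hypothesis. The tube $T$ is a disk bundle over $S^1$, so $\pi_1(T)$ is virtually infinite cyclic with infinitely many distinct conjugacy classes $[\gamma^n]$, $n \in \mathbb{Z}$, coming from powers of a generator $\gamma$ of the infinite cyclic quotient. Applying Igusa's corrected formula (Theorem~8.a.2 of~\cite{I84}; cf.~Theorem~3.1 of~\cite{H}) for the stable pseudo-isotopy functor at the prime $p$ in dimension $2p-4$ produces an $\mathbb{F}_p$-summand indexed by each non-trivial conjugacy class, yielding a subgroup $V \cong \mathbb{Z}_p^\infty$ of $\pi_{2p-4}\PP(T)$ spanned by these classes.

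Next I would split $V$ into eigenspaces under the involution. The pseudo-isotopy involution acts on the Igusa summand through the standard Whitehead-group involution $[g] \mapsto \pm [g^{-1}]$, where the sign is dictated by the orientation character of $T$. On $V$ this swaps the summands indexed by $[\gamma^n]$ and $[\gamma^{-n}]$; since $p$ is odd, $2$ is invertible mod $p$, so $V$ decomposes as
$$
V = V^+ \oplus V^-, \qquad V^\pm \cong \mathbb{Z}_p^\infty,
$$
where $V^\pm$ is spanned by the symmetric/antisymmetric combinations $[\gamma^n] \pm [\gamma^{-n}]$ for $n > 0$. Now fix any isomorphism $\psi \colon V^+ \to V^-$ and let $S := \{v + \psi(v) : v \in V^+\}$; this is a copy of $\mathbb{Z}_p^\infty$ in $V \subset \pi_{2p-4}\Ps(T)$. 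For $x = v + \psi(v) \in S$ one computes $\bar{x} = v - \psi(v)$, hence
$$
x + \bar{x} = 2v, \qquad x - \bar{x} = 2\psi(v),
$$
and both assignments are injective on $S$ because $p$ is odd and $\psi$ is an isomorphism.

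The main obstacle is the identification of the involution in the second step: one must verify that the pseudo-isotopy involution genuinely acts on the relevant Igusa summand by $[g] \mapsto \pm [g^{-1}]$, with the orientation character of $T$ determining the sign. For $k = 0$ this is the classical Hatcher--Wagoner computation $Wh_1(\pi_1(M); \mathbb{Z}_2)/\{c + \varepsilon\bar{c}\} = \mathbb{Z}_2^\infty$ used in the proof of Proposition~\ref{prop_nil1}; for $k = 2p - 4$ with odd $p$ the required statement is the higher analog, which is implicit in~\cite{I84} but requires careful bookkeeping of how the involution acts through the Igusa classifying map.
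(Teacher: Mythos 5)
Your proposal has a genuine gap at its central step. You appeal to ``Igusa's corrected formula (Theorem~8.a.2 of~\cite{I84}) for the stable pseudo-isotopy functor at the prime $p$ in dimension $2p-4$'' to produce the $\mathbb{Z}_p^\infty$ summand, but that formula is a statement about $\pi_0$ of the pseudo-isotopy space (it computes $\pi_0 C(M)$ in terms of $Wh_2$ and $Wh_1(\pi_1;\mathbb{Z}_2\times\pi_2)$), and has no higher-degree analog of the shape you are using. For $p$ odd and $k=2p-4\geq 2$, the relevant $\mathbb{Z}_p^\infty$ does not come from counting conjugacy classes of $\pi_1(T)$; it comes from the A-theoretic nil-term calculations for $S^1$ of Grunewald--Klein--Macko~\cite{GKM}, packaged as Proposition~4.6 of~\cite{FO}, which is exactly what the paper cites. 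That proposition already delivers both properties~1 and~2 of Fact~\ref{fact5} for $\Ps(S^1\times\mathbb{D}^{n-1})$ after one application of Igusa's \emph{stability} theorem~\cite{I} (which is the correct use of Igusa's work here), so the eigenspace-plus-diagonal construction you perform to recover property~2 is both unsupported (you never justify that $V^{\pm}\cong\mathbb{Z}_p^\infty$ under the actual involution) and unnecessary.

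You also do not address the non-orientable case, which the paper treats separately: when the tube $T$ is a twisted disk bundle $\mathcal{M}\times\mathbb{D}^{n-2}$ over $S^1$, the paper compares $\Ps(T)$ with $\Ps(S^1\times\mathbb{D}^{n-1})$ via the orientation double cover, showing $q^*\circ\omega_*$ is multiplication by $2$ so that the induction map $\omega_*$ has only $2$-torsion in its kernel, and then pushes the subgroup $S'\subset\pi_k\Ps(S^1\times\mathbb{D}^{n-1})$ forward. Your uniform treatment ignores the sign twist by the orientation character $w_1(T)$, which is precisely what the decomposition $V=V^+\oplus V^-$ would be sensitive to and which you flag yourself as the ``main obstacle'' without resolving it. In short, the correct input is the GKM/A-theory computation of $\pi_k\PP^s(S^1)$ at odd primes, not a higher analog of the Hatcher--Wagoner $\pi_0$ formula, and the non-orientable $T$ requires the transfer argument.
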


When $T$ is orientable, \ie $T=S^1\times \mathbb D^{n-1}$, then Fact~\ref{fact5} follows from Proposition~4.6 of~\cite{FO} --- which is the analogous result valid for $\pi_k\PP^s(S^1)$ --- by again using Igusa's stability theorem~\cite{I}. We note that Proposition~4.6 depended on important calculations of $\pi_k\PP^s(S^1)$ which can be found in~\cite{GKM}.

In the non-orientable case, $T=\M\times\mathbb D^{n-2}$ where $\M$ denotes the M\"obius band and we argue as follows. Since $S^1\times\mathbb D^1$ is a collar neighborhood of the boundary $\partial\M$, we can also identify $S^1\times\mathbb D^{n-1}$ with a collar neighborhood of $\partial T$. There are two natural maps $S^1\times\mathbb D^{n-1}\to T$; namely, the inclusion map $\omega$  of the collar neighborhood and the (2-sheeted) orientation covering map $q\colon S^1\times\mathbb D^{n-1}\to T$ which induces a transfer map
$$
q^*\colon\Ps(T)\to\Ps(S^1\times\mathbb D^{n-1}).
$$
And let
$$
\omega_*\colon\Ps(S^1\times\mathbb D^{n-1})\to\Ps(T)
$$
denote the natural (induction) map previously denoted by $\Ps(\omega)$. A pleasant exercise shows that
$$
\pi_k(q^*\circ\omega_*)\colon\pi_k\Ps(S^1\times\mathbb D^{n-1})\to\pi_k\Ps(S^1\times\mathbb D^{n-1})
$$
is multiplication by 2. Hence the kernel of
$$
\pi_k(\omega_{\ast})\colon\pi_k\Ps(S^1\times\mathbb D^{n-1})\to\pi_k\Ps(T)
$$
contains only 2-torsion. Denote $\pi_k(\omega_{\ast})$ by $\Omega$ and let $S'$ be a subgroup of $\pi_k(\Ps(S^1\times\mathbb D^{n-1}))$ satisfying properties~1 and 2 of Fact~\ref{fact5}. Then $S\stackrel{\mathrm{def}}{=}\Omega(S')$ is a subgroup of $\pi_k\Ps(T)$ which clearly satisfies property~1 since $p\neq 2$. To see that property~2 is also satisfied consider the homomorphism $\Phi\colon S'\to\pi_k\Ps(T)$ defined by
$$
\Phi(y)\stackrel{\mathrm{def}}{=}\Omega(y+\bar y),
$$ 
and observe that $\Phi$ is monic since the homomorphism
$$
y\mapsto y+\bar y,\;\; y\in S'
$$
is monic and its image contains only $p$-torsion ($p\neq 2$). But
$$
\Phi(y)=\Omega(y)+\overline{\Omega(y)}=x+\bar x,
$$
where $x=\Omega(y)$ is an arbitrary element of $S$. Consequently the homomorphism $x\mapsto x+\bar x$, $x\in S$ is also monic. A completely analogous argument shows that
$$
x\mapsto x-\bar x, \;\;x\in S
$$
is monic completing the verification of property 2.

We now string together these facts to prove the Assertion. The group $S$ of the Assertion is the one given in Fact~\ref{fact5}. Applying the functor $\pi_k(\cdot)$ to diagram~$(\star)$ yields the following commutative diagram 
$$
\xymatrix{
S\subset\!\!\!\!\!\!\!\!\!\!\!\!\!\!\!\!\!\!\!\!\!\!\!\!\!\!\!\!\!\!\!\!\!\!\!\!\!&\pi_k\Ps(T) \ar[r] \ar[d]_\eta &\pi_k\Ps(M) \ar[r] \ar[d]_\pisces &\pi_k\D \ar[d]\\
&\pi_k\P(T) \ar[r] \ar[d]_\psi&\pi_k\P(M) \ar[d]_\Psi \ar[r]^{\pi_k(\P(t))}&\pi_k\HH&(\star\star) \\
&H_0(\mathbb Z_2;\pi_k\P(T))\ar[r]^{\varkappa} &H_0(\mathbb Z_2;\pi_k\P(M))\ar[ru]_\varphi}
$$
The triangle in diagram~$(\star\star)$ is the factorization of $\pi_k(\P(t))$ given in Fact~\ref{fact4}. And the bottom vertical maps are the quotient homomorphisms
$$
\pi_k\P(X)\to\pi_k\P(X)/ \{x-\bar x\colon x\in\pi_k\P(X)\},
$$
where $X=T$ and $M$, respectively. Also we denote by $\eta$ the homomorphism studied in Fact~\ref{fact1}, and by $\varkappa$ the monomorphism in Fact~\ref{fact3}.

Combining Facts~\ref{fact1} and~\ref{fact5}, we see that the kernel of $x\mapsto\eta(x+\bar x)$, $x\in S$, is a finite group. Consequently, the kernel of
$$
\psi\circ \eta\colon S\to H_0(\mathbb Z_2;\pi_k P(T))
$$
is also a finite group. And since $\psi\circ\eta(S)$ is a $p$-torsion group, where $p\neq 2$, we see that
$$
\varphi\circ\varkappa\colon \textup{image}(\psi\circ\eta)\to\pi_k\HH
$$
is monic by Facts~\ref{fact3} and~\ref{fact4}. Therefore the composition
$$
\varphi\circ\varkappa\circ\psi\circ\eta\colon S\to\pi_k\HH
$$
has finite kernel. But this is the homomorphism of part 2 of the Assertion.
\end{proof}

\begin{proof}[Proof of Proposition~\ref{prop_torus2}]
Let $X\mapsto A(X)$ denote Waldhausen's algebraic $K$-theory of spaces functor defined in~\cite{Wald}. We start with the following result.
\begin{lemma}
\label{lemma1}
 For every prime $p\neq 2$ and every integer $k\in [2p-4, (2p-4)+n-1]$, the group $\pi_kA(\mathbb T^n)$ contains a subgroup $\mathbb Z_p^\infty$ such that the following two group endomorphisms
$$
x\mapsto x+\bar x\;\;\;\;\;\;\;\mbox{and}\;\;\;\;\;\; x\mapsto x-\bar x
$$
are both monic when restricted to $\mathbb Z_p^\infty$.
\end{lemma}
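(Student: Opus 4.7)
The plan is to transfer the GKM calculation for $\pi_\ast A(S^1)$ to $A(\mathbb T^n)$ by iterated application of Waldhausen's Bass--Heller--Swan (BHS) splitting, and then to track the canonical involution through this transfer.

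First, I would invoke the BHS-type splitting for $A$-theory,
$$A(X\times S^1)\simeq A(X)\vee \Sigma A(X)\vee \mathrm{Nil}^+(X)\vee \mathrm{Nil}^-(X).$$
Iterating this $n-1$ times, starting from $\mathbb T^n=\mathbb T^{n-1}\times S^1$, exhibits $\Sigma^i A(S^1)$ as a wedge summand (hence a retract up to homotopy) of $A(\mathbb T^n)$ for every $i\in\{0,1,\dots,n-1\}$. Consequently, any subgroup of $\pi_{2p-4}A(S^1)$ gives rise to a subgroup of $\pi_{2p-4+i}A(\mathbb T^n)$, and varying $i$ covers precisely the range of $k$ stated in the Lemma.

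Second, I would produce the target $\mathbb Z_p^\infty$ in $\pi_{2p-4}A(S^1)$. Using Waldhausen's identification of the stable pseudo-isotopy functor $\mathcal P^s$ with a delooping of $\mathrm{Wh}^{Diff}$, the GKM computations underlying Proposition~4.6 of \cite{FO} (the input for Fact~\ref{fact5} in the orientable case) supply a subgroup $\mathbb Z_p^\infty\subset\pi_{2p-4}A(S^1)$ on which both $x\mapsto x+\bar x$ and $x\mapsto x-\bar x$ are monic. Since Borel's computation forces $\pi_{2p-4}A(\mathrm{pt})=K_{2p-4}(\mathbb Z)$ to be finite, this $\mathbb Z_p^\infty$ must lie in the $\mathrm{Nil}^\pm$ summands of the BHS decomposition of $A(S^1)$, where the involution swaps the two Nil factors, and this swap together with the hypothesis $p\neq 2$ yields the claimed monicity of both $1\pm\overline{(\cdot)}$.

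Third, I would propagate this information through each retract. The canonical involution preserves $\Sigma^i A(S^1)\subset A(\mathbb T^n)$ up to the sign $(-1)^i$ introduced by reflecting the $i$ circle factors that have been suspended; since $p$ is odd, multiplication by $\pm 1$ is an automorphism of any $p$-group, so the monicity of both $x+\bar x$ and $x-\bar x$ on the chosen $\mathbb Z_p^\infty\subset\pi_{2p-4}A(S^1)$ transfers (possibly with the two properties swapped) to the shifted copy inside $\pi_{2p-4+i}A(\mathbb T^n)$. The main obstacle will be verifying the compatibility of the iterated BHS splitting with the canonical $A$-theory involution in enough detail to justify this sign analysis; this is essentially present in the work of Vogell and of Weiss--Williams on the involution in $A$-theory, but has to be assembled carefully. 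A secondary subtlety is faithfully translating the GKM result, originally phrased for stable pseudo-isotopies, into the form needed here via Waldhausen's splitting $A(X)\simeq\Sigma^\infty_+ X\vee\mathrm{Wh}^{Diff}(X)$.
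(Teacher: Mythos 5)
Your proposal is correct and is essentially the paper's argument: the paper proves the lemma by induction on $n$, using the involution-compatible Bass--Heller--Swan splitting of \cite{HKVWW} (which you invoke directly by iterating it) to reduce to the $n=1$ base case, which is exactly the subgroup of $\pi_{2p-4}A(S^1)$ coming from Proposition~4.6 of \cite{FO} and the GKM computations, just as in your second step. Your explicit sign discussion and the remark about translating between $\PP^s$ and $A$-theory are details the paper leaves to the cited references, but the route is the same.
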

\begin{proof}
We verify this by induction on $n$. For $n=1$ it was verified in the proof of Proposition~4.6 from~\cite{FO}. Now
assume that Lemma~\ref{lemma1} is true for $n$, we proceed to verify it for $n+1$. Since $\mathbb T^{n+1}=\mathbb T^n\times S^1$, $\pi_kA(\mathbb T^{n+1})$ contains as subgroups both $\pi_kA(\mathbb T^n)$ and $\pi_{k-1}A(\mathbb T^n)$ in an involution consistent way; cf.~\cite{HKVWW}.
\end{proof}
Since Waldhausen proved in~\cite{Wald} that the kernel of $\pi_kA(X)\to\pi_k\Ps(X)$ is finitely generated, Igusa's stability theorem~\cite{I} yields the following variant of Lemma~\ref{lemma1}.
\begin{lemmax} Let $p$ be a prime number different from 2 and $k$ be an integer such that $2p-4\le k<\frac{n-7}{3}$. Then $\pi_k\Ps(\mathbb T^n)$ contains a subgroup $\mathbb Z_p^\infty$ such that the following two group endomorphisms
$$
x\mapsto x+\bar x\;\;\;\;\;\;\;\mbox{and}\;\;\;\;\;\; x\mapsto x-\bar x
$$
are both monic when restricted to $\mathbb Z_p^\infty$.
\end{lemmax}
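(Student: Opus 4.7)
The plan is to transfer the $\mathbb Z_p^\infty$ subgroup of $\pi_kA(\mathbb T^n)$ provided by Lemma~\ref{lemma1} into $\pi_k\Ps(\mathbb T^n)$ through Waldhausen's comparison map, and then use involution-equivariance of this map to preserve the monicity of $x\mapsto x\pm\bar x$. The hypothesis $2p-4\le k<\frac{n-7}{3}$ ensures that $k$ lies in the range $[2p-4,(2p-4)+n-1]$ required by Lemma~\ref{lemma1}, so I would start by applying Lemma~\ref{lemma1} to obtain a subgroup $S'\simeq\mathbb Z_p^\infty$ of $\pi_kA(\mathbb T^n)$ on which both $x\mapsto x+\bar x$ and $x\mapsto x-\bar x$ are monic.

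By Waldhausen's theorem~\cite{Wald} together with Igusa's stability theorem~\cite{I} (which makes the comparison unstable in the range $k<\frac{n-7}{3}$), there is a natural map $\phi\colon\pi_kA(\mathbb T^n)\to\pi_k\Ps(\mathbb T^n)$ whose kernel $K$ is finitely generated. Since $S'$ has bounded exponent $p$ while $K$ is finitely generated, $S'\cap K$ is finite, so the image $\phi(S')\simeq S'/(S'\cap K)$ is again a countably infinite-dimensional $\mathbb F_p$-vector space, isomorphic to $\mathbb Z_p^\infty$. Assuming $\phi$ is equivariant for the Waldhausen duality involution on $A(\mathbb T^n)$ and the standard ``flip'' involution on $\Ps(\mathbb T^n)$, I would argue that each endomorphism $x\mapsto x\pm\bar x$ of $\phi(S')$ has finite kernel: if $\phi(y)\pm\overline{\phi(y)}=0$, then $y\pm\bar y\in S'\cap K$, and monicity of $y\mapsto y\pm\bar y$ on $S'$ forces the set of such $y$ to be finite. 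Let $F\subset\phi(S')$ be the finite subgroup generated by these two kernels, and let $S$ be any $\mathbb F_p$-linear complement of $F$ inside $\phi(S')$; then $S\simeq\mathbb Z_p^\infty$ and both endomorphisms $x\mapsto x\pm\bar x$ are monic when restricted to $S$.

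The one step requiring real care, and the main technical obstacle in this plan, is verifying that $\phi$ is involution-equivariant---that the $A$-theoretic duality involution used in Lemma~\ref{lemma1} is carried to the pseudo-isotopy ``flip'' involution through Waldhausen's comparison map and Igusa's stability isomorphism. This compatibility is standard in principle (both involutions ultimately descend from Poincar\'e duality on $\mathbb T^n$), but once it is established the remaining $\mathbb F_p$-linear-algebra bookkeeping sketched above finishes the proof.
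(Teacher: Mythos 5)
Your proposal follows the same route as the paper: apply Lemma~\ref{lemma1} to get $S'\simeq\mathbb Z_p^\infty\subset\pi_kA(\mathbb T^n)$, then pass to $\pi_k\Ps(\mathbb T^n)$ using Waldhausen's result that the kernel of the comparison map from $\pi_kA$ to stable smooth pseudo-isotopy is finitely generated, with Igusa's stability theorem identifying the stable and unstable pseudo-isotopy groups in the range $k<\frac{n-7}{3}$. The involution-equivariance of this comparison you flag is indeed the technical input the paper leaves implicit (the citation of~\cite{HKVWW} is precisely about the canonical involution on $A$-theory), so your instinct about where the real content lies is correct. One small slip in the bookkeeping: from $\phi(y)\pm\overline{\phi(y)}=0$ you only get $y\pm\bar y\in K$, not $y\pm\bar y\in S'\cap K$; to conclude finiteness you should note that the image $\{y\pm\bar y:y\in S'\}$ is $p$-torsion, hence meets the finitely generated group $K$ in a finite set, and only then does injectivity of $y\mapsto y\pm\bar y$ on $S'$ bound the kernel.
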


Now we follow the pattern used to prove Proposition~\ref{prop_nil2} except that the argument is now simpler since the first column in both diagrams~$(\star)$ and~$(\star\star)$ can be omitted. It clearly suffices to show that the subgroup $\mathbb Z_p^\infty$ of $\pi_k\Ps(\mathbb T^n)$, given by Lemma~\ref{lemma1}$'$ maps with finite kernel into $\pi_k\HHT$ via composite homomorphism $\varphi\circ\Psi\circ\pisces$. Since the kernel of
$$
\pisces\colon\pi_k\Ps(\mathbb T^n)\to\pi_k\P(M)
$$
is finitely generated by Corollary~4.2 of~\cite{FO}, the kernel of $\pisces|_{\mathbb Z_p^\infty}$ is finite. Now arguing as in the proof of Proposition~\ref{prop_nil2}, we see that the kernel of
$\Psi\circ\pisces:\mathbb Z_p^\infty\to H_0(\mathbb Z_2;\pi_k\P(\mathbb T^n)$
is also finite. (Here we crucially use the fact from Lemma~\ref{lemma1}$'$ that $x\mapsto x\pm\bar x $ is monic for $x\in\mathbb Z_p$.) Finally, we conclude from Fact~\ref{fact4} that
$$
\varphi\circ(\Psi\circ\pisces)\colon\mathbb Z_p\to\pi_k\HHT
$$
has finite kernel since $p\neq2$.
\end{proof}

\section{Proof of Proposition~\ref{prop_translation}}
\label{appC}
The universal cover of the infranilmanifold $M$ is a simply connected nilpotent Lie group $N$. The fundamental group $\pi_1(M)$ acts freely on the right by affine diffeomorphisms. Group $N$ acts on itself by left translations. Let
$$
\Gamma\stackrel{\mathrm{def}}{=}\pi_1(M)\cap N.
$$
It is well known that 
$$
\hat M\stackrel{\mathrm{def}}{=}N/\Gamma
$$
is a compact nilmanifold and
$$
G\stackrel{\mathrm{def}}{=}\pi_1(M)/\Gamma\subset\A
$$
is a finite subgroup of {\it the group $\A$ of all affine diffeomorphisms of $\hat M$}. Group $G$ acts freely on $\hat M$; the orbit space of this action is the infranilmanifold $M$.
\begin{ffact}\label{ffact1}
The centralizer $N^{\pi_1M}\stackrel{\mathrm{def}}{=}\{x\in N :\;\; axa^{-1}=x\;\;\mbox{for all}\; a\in\pi_1(M)\}$ is path connected.
\end{ffact}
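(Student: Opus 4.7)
The plan is to identify $N^{\pi_1 M}$ as the fixed-point set in $N$ of a group of Lie-group automorphisms, and then transfer the problem to the Lie algebra $\mathfrak n$ via the exponential map, where it becomes manifestly a linear subspace.

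First I would observe that since $N$ is a normal subgroup of the ambient semidirect product $N\rtimes G$, conjugation by any element $a\in\pi_1(M)\subset N\rtimes G$ preserves $N$ and restricts to a \emph{Lie-group} automorphism $\phi_a\colon N\to N$; explicitly, writing $a=(n_0,g_0)$, we have
$$
\phi_a(x)=n_0\cdot g_0(x)\cdot n_0^{-1},
$$
the composition of the automorphism $g_0$ built into the semidirect product with inner conjugation by $n_0$ in $N$. The map $a\mapsto\phi_a$ is a homomorphism $\pi_1(M)\to\operatorname{Aut}(N)$, and by definition
$$
N^{\pi_1 M}=\bigcap_{a\in\pi_1(M)}\operatorname{Fix}(\phi_a).
$$

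Next I would pass to the Lie algebra. Since $N$ is simply connected and nilpotent, $\exp\colon\mathfrak n\to N$ is a diffeomorphism, and for every Lie-group automorphism $\phi$ of $N$ one has $\phi\circ\exp=\exp\circ(d\phi)$. Hence
$$
\operatorname{Fix}(\phi_a)=\exp\bigl(\operatorname{Fix}(d\phi_a)\bigr),
$$
where the right-hand side is the image under a diffeomorphism of a \emph{linear subspace} of $\mathfrak n$, namely the kernel of the linear operator $d\phi_a-\operatorname{id}_{\mathfrak n}$. Intersecting over $a\in\pi_1(M)$ preserves linearity, and therefore
$$
N^{\pi_1 M}=\exp\Bigl(\,\bigcap_{a\in\pi_1(M)}\operatorname{Fix}(d\phi_a)\,\Bigr)
$$
is the diffeomorphic image of a linear subspace of $\mathfrak n$. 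In particular it is diffeomorphic to a Euclidean space, and so is path connected.

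The only delicate point is the first step — checking that the conjugation action of $\pi_1(M)$ on $N$ actually lands in $\operatorname{Aut}(N)$ rather than merely in the larger group of affine diffeomorphisms of $N$ — but this is immediate from the normality of $N$ in $N\rtimes G$. Once this is in hand, everything reduces to the observation that the fixed-point locus of a family of \emph{linear} endomorphisms is a linear subspace, which the diffeomorphism $\exp$ then transports back to a contractible subset of $N$.
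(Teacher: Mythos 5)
Your proof is correct and uses essentially the same idea as the paper: both arguments hinge on Mal\cprime cev's theorem that for a simply connected nilpotent $N$ the exponential is a bijection (equivalently, that one-parameter subgroups exist and are unique), and both conclude by showing that the one-parameter subgroup through a centralizing element stays in the centralizer. The paper phrases this directly in terms of uniqueness of the one-parameter subgroup $\alpha$ with $\alpha(1)=x$ being preserved under conjugation, whereas you transport the problem to $\mathfrak n$ and observe that $N^{\pi_1 M}=\exp\bigl(\bigcap_a\ker(d\phi_a-\mathrm{id})\bigr)$ is the diffeomorphic image of a linear subspace (in fact giving the slightly stronger conclusion that the centralizer is contractible), but the substance is the same.
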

\begin{proof}
Let $x\in N^{\pi_1M}$. Since $N$ is simply connected nilpotent Lie group, Mal$'$cev's work~\cite{Malcev} yields a $1$-parameter subgroup $\alpha\colon\mathbb R\to N$ such that $\alpha(1)=x$. Let $a\in\pi_1(M)$, then
$$
\beta(s)\stackrel{\mathrm{def}}{=}a\alpha(s)a^{-1}
$$
is also a 1-parameter subgroup such that $\beta(1)=a\alpha(1)a^{-1}=axa^{-1}=x$. Hence $\alpha(s)=\beta(s)$ for all $s\in\mathbb R$, again by Mal$'$cev's work. Therefore $\alpha(s)\in N^{\pi_1M}$ for all $s\in\mathbb R$, yielding that $N^{\pi_1M}$ is connected.
\end{proof}

Group $N$ also acts on $\hat M$ by left translations. In this way
$$
N\to\A\subset Dif\!f(\hat M)
$$
and its image, denoted by $\N$, is a Lie group called {\it the group of translations of $\hat M$}. Since the kernel of $N\to\A$ is $\Z(N)\cap\Gamma$ we have that
$$
\N=N/\Z(N)\cap \Gamma.
$$
Here $\Z(N)$ stands for the center of $N$.

Let $\NN=\NN(\N,G)$ denote the normalizer of $G$ inside $\N$.
\begin{ffact}\label{ffact2}
$\NN=\N^G\stackrel{\mathrm{def}}{=}\{x\in\N:\;gxg^{-1}=x\;\;\mbox{for all}\; g\in G\}$.
\end{ffact}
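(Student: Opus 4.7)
The inclusion $\N^G \subseteq \NN$ is immediate from the definitions: any element centralizing $G$ \emph{a fortiori} normalizes $G$. For the reverse, my plan rests on the embedding $\A \subseteq (N \rtimes \mathrm{Aut}(N))/(\Z(N) \cap \Gamma)$ coming from the affine structure on $\hat{M}$, together with two structural observations.

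The first observation is that $\N$ is normal in $\A$: writing elements of $\A$ as classes of pairs $(n,L) \in N \rtimes \mathrm{Aut}(N)$, the semidirect product formula gives
$$
(m,L)(n,\mathrm{id})(m,L)^{-1}=(mL(n)m^{-1},\mathrm{id})\in\N.
$$
The second observation is that $G \cap \N = \{e\}$. Under the standing convention that the finite group in the definition of the infranilmanifold acts faithfully on $N$ (which can be arranged by quotienting out the kernel of the action), the projection $\A \to \mathrm{Aut}(N)$ restricts to an injection on $G$; hence every non-identity element of $G$ has non-trivial automorphism part and lies outside $\N$.

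With these in hand, the argument is a single line. Given $x \in \NN$ and $g \in G$, normality gives $xgx^{-1} = g \cdot \bigl((g^{-1} x g) x^{-1}\bigr) \in g\N$, while the hypothesis $x \in \NN$ gives $xgx^{-1} \in G$. Therefore $xgx^{-1} \in G \cap g\N = g(G \cap \N) = \{g\}$ by the second observation, so $xgx^{-1} = g$ and $x \in \N^G$. The main obstacle will be verifying the second observation cleanly, since in principle the deck group $G$ could contain elements that happen to act on $\hat{M}$ as translations; once the faithfulness convention on the defining finite group is made explicit this possibility is excluded, and the remainder of the proof is purely algebraic and requires no dynamical input or appeal to Fact~1.
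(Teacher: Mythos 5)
Your proposal is correct and essentially mirrors the paper's own two-line argument, which likewise rests on the two observations that $\N$ is normal in $\A$ and that $\N\cap G=1$, and then deduces $[x,g]\in\N\cap G=1$ for $x\in\NN$, $g\in G$; your rephrasing $xgx^{-1}\in G\cap g\N=\{g\}$ is the same computation. Where you differ is only in supplying justifications the paper omits: the semidirect-product check that $\N\lhd\A$, and an argument for $\N\cap G=1$. On the latter point you are right to worry: faithfulness of the $G$-action on $N$ by itself is not quite enough, because the natural map from (the relevant subgroup of) $N\rtimes\mathrm{Aut}(N)$ onto $\A$ has a nontrivial kernel, and \emph{a priori} an element with nontrivial automorphism part could still induce a left translation on $\hat M$. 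A short computation shows that this would force the automorphism part to be conjugation by an element of $\Gamma$, i.e.\ a nontrivial finite-order element of $\mathrm{Inn}(N)\cong N/\Z(N)$ — impossible, since $N/\Z(N)$ is a simply connected nilpotent Lie group and hence torsion-free. With that one extra step your ``second observation'' is airtight, and the rest of your argument matches the paper.
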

\begin{proof}
Clearly $\N^G\subseteq\NN$. Since $\N$ is a normal subgroup of $\A$
$$
[x,g]\in\N\cap G=1
$$
for all $x\in\NN$ and $g\in G$. Therefore we also have $\NN\subseteq\N^G$.
\end{proof}

Note that the action by left translations of $\NN$ on $\hat M$ descends to an action by ``translations" on $M$. Let $t$ be such a (left) translation on $M$ that is homotopic to $id_M$ and let $\hat t\colon\hat M\to\hat M$ be a lift of $t$ which is homotopic to $id_{\hat M}$. Then $\hat t$ is also 
a left translation. Clearly $\hat t\in\NN=\N^G$.
\begin{ffact}\label{ffact3}
There exists a lift $\tilde t\colon N\to N$ of $\hat t$ such that $\tilde ta\tilde t^{-1}=a$ for all $a\in\pi_1(M)$.
\end{ffact}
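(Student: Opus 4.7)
The plan is to construct $\tilde t$ directly by lifting a homotopy that witnesses $t\simeq\mathrm{id}_M$ to the universal cover $N$; Facts~\ref{ffact1} and~\ref{ffact2} are not used in this argument.

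Fix any continuous homotopy $H\colon M\times[0,1]\to M$ with $H_0=\mathrm{id}_M$ and $H_1=t$. Because $N\times[0,1]$ is simply connected, the universal covering $p\colon N\to M$ admits a unique continuous lift $\tilde H\colon N\times[0,1]\to N$ of $H\circ(p\times\mathrm{id})$ satisfying $\tilde H_0=\mathrm{id}_N$. To see that each $\tilde H_s$ is $\pi_1(M)$-equivariant, fix $a\in\pi_1(M)$ and consider the family $\tilde H^a_s(n):=\tilde H_s(n\cdot a)\cdot a^{-1}$. Right-multiplication by $a$ preserves the $p$-fibers, so $\tilde H^a$ is again a lift of $H\circ(p\times\mathrm{id})$; moreover $\tilde H^a_0=\mathrm{id}_N$, so uniqueness of the lift forces $\tilde H^a_s=\tilde H_s$ for all $s$, giving $\tilde H_s(n\cdot a)=\tilde H_s(n)\cdot a$. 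Setting $\tilde t:=\tilde H_1$ (a self-homeomorphism of $N$, as the lift of the homeomorphism $t$ under a covering map), we obtain $\tilde t(n\cdot a)=\tilde t(n)\cdot a$ for every $a\in\pi_1(M)$, which is precisely the relation $\tilde t\,a\,\tilde t^{-1}=a$.

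It remains to verify that $\tilde t$ is a lift of the specific $\hat t$ rather than merely of $t$. Because $\Gamma\subset\pi_1(M)$, the map $\tilde t$ commutes with the $\Gamma$-action and therefore descends through $N\to\hat M$ to a self-homeomorphism $\check t\colon\hat M\to\hat M$; by construction $\check t$ is a lift of $t$ and is homotopic to $\mathrm{id}_{\hat M}$ via the descent of $\tilde H_s$. This identification---that $\check t$ coincides with the prescribed $\hat t$---is the only substantive point: any two lifts of $t$ to $\hat M$ differ by a deck transformation from $G$, and any non-identity element of $G$ acts on $\pi_1(\hat M)=\Gamma$ by a non-trivial automorphism (since an automorphism of $N$ that restricts to the identity on the cocompact lattice $\Gamma$ must itself be the identity), and hence cannot be homotopic to $\mathrm{id}_{\hat M}$. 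Therefore $\check t=\hat t$, and $\tilde t$ is the desired lift.
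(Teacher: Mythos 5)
Your construction is a genuine alternative to the paper's argument, and the core of it is correct. Where the paper works group-theoretically—taking an arbitrary lift $\bar t\in\widetilde{\N}^G$ of $\hat t$, noting that $t\simeq\mathrm{id}_M$ forces the image of $\hat t$ under $\N^G\to Out(\pi_1(M))$ to be trivial, and then multiplying $\bar t$ by an element $b\in\pi_1(M)$ to kill the inner automorphism—you instead lift a homotopy $\mathrm{id}_M\simeq t$ through $p\colon N\to M$ starting at $\mathrm{id}_N$ and use uniqueness of lifts to get $\pi_1(M)$-equivariance of the whole family $\tilde H_s$, hence of $\tilde t:=\tilde H_1$. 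That step is correct and, pleasantly, produces the commuting lift directly rather than correcting a bad lift after the fact.

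There is, however, a gap in your final verification that the descended map $\check t$ coincides with the given $\hat t$. You reduce this to showing that no non-identity $g\in G$ is homotopic to $\mathrm{id}_{\hat M}$, and you justify this by saying that a non-identity $g\in G$ ``acts on $\Gamma$ by a non-trivial automorphism'' because an automorphism of $N$ restricting to the identity on $\Gamma$ is the identity. But $g$ has no fixed point, so the only thing a free homotopy $g\simeq\mathrm{id}_{\hat M}$ controls is the \emph{outer} automorphism class induced on $\Gamma=\pi_1(\hat M)$; it does not force the induced automorphism to be the identity, only to be \emph{inner}. Your Mal$'$cev-rigidity observation rules out the former but not the latter, so as written the argument does not exclude a $g\neq\mathrm{id}$ inducing a non-trivial inner automorphism of $\Gamma$. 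What is actually needed is injectivity of $G\to Out(\Gamma)$ (equivalently, that no non-trivial deck transformation of $\hat M\to M$ is freely homotopic to $\mathrm{id}_{\hat M}$). This can be patched with the very technique you already use: lift a putative homotopy $g\simeq\mathrm{id}_{\hat M}$ to $N$ ending at $\mathrm{id}_N$ to obtain a $\Gamma$-equivariant lift $\tilde g$ of $g$; since $g$ covers $\mathrm{id}_M$, $\tilde g$ is the action of some $a_0\in\pi_1(M)$, and $\Gamma$-equivariance says $a_0$ centralizes $\Gamma$; one then has to argue $a_0\in\Gamma$ (so $g=\mathrm{id}$) using the structure of the almost-crystallographic group $\pi_1(M)$ (for instance, that $\Gamma$ is the Fitting subgroup, or that $\N\cap G=1$ as the paper uses in Fact~\ref{ffact2}). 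Until that is supplied, the claim $\check t=\hat t$ is not established—though, for the application in the proof of Proposition~\ref{prop_translation}, producing a $\pi_1(M)$-commuting lift of \emph{some} lift of $t$ to $\hat M$ would actually suffice.
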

\begin{proof}
Consider the group $\widetilde \N^G$ of all lifts of elements of $\N^G$ to the universal cover $N$. Since $\N^G\cap G=1$  the group $\N^G$ acts effectively on $M$ and the sequence
$$
1\to\pi_1(M)\to\widetilde\N^G\to\N^G\to 1
$$
is exact. There is a homomorphism
$$
H\colon\N^G\to Out(\pi_1(M))
$$
induced by conjugation by elements of $\widetilde \N^G$. (See Section~IV.6 of~\cite{Br}.) 

Take any lift $\bar t\in\widetilde N^G$ of $\hat t$. Recall that $t$ is homotopic to identity. This implies that $H(t)=[id_{\pi_1M}]$. Therefore
the corresponding automorphism of $\pi_1(M)$
$$
a\mapsto \bar ta\bar t^{-1}
$$
is an inner automorphism
$$
a\mapsto b^{-1}ab
$$
where $b\in\pi_1(M)$. Then $\tilde t\stackrel{\mathrm{def}}{=}b\bar t$ is the posited lift.
\end{proof}

\begin{ffact}\label{ffact4}
Assume that $f\colon\hat M\to\hat M$ is an affine diffeomorphism of the nilmanifold $\hat M$ homotopic to $id_{\hat M}$. Then $f$ is a translation.
\end{ffact}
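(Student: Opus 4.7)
The plan is to use the classical rigidity of simply connected nilpotent Lie groups via Mal$'$cev's work. By the definition of an affine diffeomorphism, any such $f$ lifts to a map $\tilde f\colon N\to N$ of the form $\tilde f(x)=v\cdot A(x)$, where $v\in N$ and $A$ is a continuous group automorphism of $N$; the requirement that $\tilde f$ descend to $\hat M=N/\Gamma$ forces $A(\Gamma)=\Gamma$. The goal is to show that, under the homotopy hypothesis on $f$, the automorphism part $A$ must be trivial, leaving only the translation part.

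Since $\hat M$ is a $K(\Gamma,1)$ (nilmanifolds are aspherical), the homotopy from $f$ to $id_{\hat M}$ lifts to a homotopy in $N$ between $id_N$ and a particular lift $\tilde f$. Because each time slice of the lifted homotopy covers the corresponding time slice of the homotopy on $\hat M$, each $\tilde f_t$ must be $\Gamma$-equivariant with respect to the standard right $\Gamma$-action in the same sense as $id_N$; that is, $\tilde f_t(x\gamma)=\tilde f_t(x)\,\gamma$ for every $\gamma\in\Gamma$ and every $t$. Setting $t=1$ and substituting $\tilde f(x)=v\cdot A(x)$ into this relation yields $v\cdot A(x)\cdot A(\gamma)=v\cdot A(x)\cdot\gamma$ for all $x\in N$ and $\gamma\in\Gamma$, so $A|_\Gamma=id_\Gamma$.

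Finally, I would invoke Mal$'$cev's rigidity theorem: the image of the uniform lattice $\Gamma$ under $\log\colon N\to\mathfrak n$ rationally spans the nilpotent Lie algebra $\mathfrak n$, so any continuous automorphism of $N$ fixing $\Gamma$ pointwise has identity derivative at the identity and must itself be $id_N$. Therefore $\tilde f$ is a pure left translation by $v$, and its descent $f$ is the translation of $\hat M$ by the class of $v$ in $\N=N/(\Z(N)\cap\Gamma)$, as required.

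The main subtlety lies in the equivariance assertion of the second step: we must ensure that the lift of the homotopy is $\Gamma$-equivariant with respect to the \emph{unaltered} right action, so that $A$ is literally the identity on $\Gamma$ rather than merely an inner automorphism (which would only follow from the homotopy type of $f$ alone). This is handled by unique path-lifting in the covering $N\to\hat M$: once the lift at $t=0$ is pinned to $id_N$, the $\Gamma$-translates of the lifted homotopy cover the same homotopy downstairs and hence must coincide with the chosen lift, forcing equivariance at every $t$.
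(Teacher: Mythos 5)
Your proof is correct, and it differs from the paper's in a small but genuine way. The paper lifts $f$ to an arbitrary affine map $\tilde f(x)=vA(x)$ and uses only the fact that a self-map homotopic to the identity induces an \emph{inner} automorphism on $\pi_1$; this gives $A(\gamma)=a\gamma a^{-1}$ for some fixed $a\in\Gamma$, so by Mal$'$cev uniqueness $A(x)=axa^{-1}$ on all of $N$. The resulting lift $x\mapsto vaxa^{-1}$ is then \emph{not} itself a translation, and the paper concludes by noting that $x\mapsto vax$ differs from it by right multiplication by $a^{-1}\in\Gamma$, hence descends to the same $f$. You instead extract more from the homotopy: by lifting the homotopy $H$ from $id_{\hat M}$ to $f$ starting at $id_N$, and using uniqueness of homotopy lifting (compare $\tilde H_t(x\gamma)\gamma^{-1}$ with $\tilde H_t(x)$), you get a \emph{specific} lift $\tilde f$ satisfying $\tilde f(x\gamma)=\tilde f(x)\gamma$, forcing $A|_\Gamma=id_\Gamma$ literally rather than up to conjugacy, and then Mal$'$cev uniqueness gives $A=id_N$ outright, so $\tilde f$ is already a pure translation. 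Both arguments hinge on Mal$'$cev's extension/uniqueness theorem; what your version buys is a cleaner endgame with no need to adjust the lift afterward, at the cost of invoking the homotopy through unique lifting rather than just through its effect on $\pi_1$ modulo inner automorphisms. One small point to keep in mind: you should check (as is easy) that the lift $\tilde H_1$ singled out by the homotopy is still of affine form $v\cdot A(x)$; this holds because changing the lift by a deck transformation $\gamma$ replaces $(v,A)$ by $(v\gamma,\,\gamma^{-1}A\gamma)$, preserving affineness.
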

\begin{proof}
Let $\tilde f\colon N\to N$ be a lift of $f$. Then $\tilde f $ has the form $\tilde f(x)=v A(x)$, where $v\in N$ and $A$ is an automorphism of $N$. The automorphism $A$ restricts to an automorphism of $\Gamma$. And, since $f$ is homotopic to identity, $A(\gamma)=a\gamma a^{-1}$, where $\gamma\in\Gamma$ and $a\in \Gamma$ is fixed. By~\cite[Theorem~5]{Malcev} $A|_{\Gamma}$ extends uniquely to an automorphism of $N$. Hence
$$
\forall x\in N\;\; A(x)=a x a^{-1}
$$
and we see that $x\mapsto vax$ is a translation that covers $f$.
\end{proof}

We start the proof of Proposition~\ref{prop_translation}. Let $x_0$ be a fixed point of $L$. Denote by  $\hat L$ and $\hat x_0$ lifts of $L$ and $x_0$ to $\hat M$ respectively. Also denote by $\hat h$ a lift of $h$ which is homotopic to $id_{\hat M}$.

Notice that $h(x_0)$ is also fixed by $L$. Clearly $\hat x_0$ and $\hat h(\hat x_0)$ are periodic for $\hat L$ and, therefore, are fixed by some power $\hat L^q$. Thus $\hat h( \hat L^q (\hat x_0))=\hat L^q(\hat h(\hat x_0))$ for some $q>0$, but $\hat h\circ \hat L^q=g_0\circ \hat L^q\circ \hat h$ for some $g_0\in G$. 
 Hence, since $G$ acts freely, $g_0=id_{\hat M}$. Therefore $\hat L^q$ and $\hat h$ commute and Theorem~2 of Walter's paper~\cite{W} implies that $\hat h$ is an affine diffeomorphism of $\hat M$. By Fact~\ref{ffact4}, the affine diffeomorphism $\hat h$ must be a translation; that is, $\hat h\in\N$.
 
Clearly $\hat h$ normalizes $G$. Therefore, by Fact~\ref{ffact2}, $\hat h\in\N^G$ and, by Fact~\ref{ffact3}, $\hat h$ admits a lift $\tilde h$ in $N^{\pi_1M}$. Fact~\ref{ffact1} implies that there is a path that connects $\tilde h$ to $id_N$ in $N^{\pi_1M}$. This path projects to a path that connects $h$ and $id_M$. Thus $h$ is isotopic to identity.


\begin{thebibliography}{texttLLLLLL}
\bibitem[Br82]{Br} K. S. Brown, {\it Cohomology of groups.} Graduate Texts in Mathematics, 87. Springer-Verlag, New York-Berlin, 1982.
\bibitem[C92]{C} G. Contreras, {\it Regularity of topological and metric entropy of hyperbolic flows.}  Math. Z.  210  (1992),  no. 1, 97--111.
\bibitem[EE69]{EE}C. Earle, J. Eells, {\it A fibre bundle description of Teichm\"uller theory.}  J. Differential Geometry,  3,  1969 19--43.
\bibitem[F02]{Far} F. T. Farrell, {\it The Borel conjecture. }Topology of high-dimensional manifolds. No. 1, edited by F. T. Farrell, L. Goettsche and W. Lueck, ICTP Lect. Notes, 9,  Trieste, Italy, 2002, pp. 225-298.
\bibitem[FH83]{FH} F. T. Farrell, W.C. Hsiang, {\it Topological characterization of flat and almost flat Riemannian manifolds $M^{n}$ $(n\ne3,4)$.}  Amer. J. Math.  105  (1983), 641--672. 
\bibitem[FG13]{FG} F. T. Farrell, A. Gogolev, {\it On bundles that admit fiberwise hyperbolic
dynamics.} Preprint.
\bibitem[FO10]{FO} F. T. Farrell, P. Ontaneda, {\it On the topology of the space of negatively curved metrics.} J. Differential Geom. 86 (2010), no. 2, 273--301
\bibitem[Fr70]{F} J. Franks, {\it Anosov diffeomorphisms.} Global Analysis, Proceedings of Synposia in Pure Mathematics, 14, AMS, Providence, RI 1970, 61-93.
\bibitem[GKM08]{GKM} J. Grunewald, J. R. Klein, T. Macko, {\it Operations on the A-theoretic nil-terms.} J. Topol. 1 (2008), no. 2, 317--341
\bibitem[HS76]{HS} W. C. Hsiang, R. W. Sharpe, {\it
Parametrized surgery and isotopy.} Pacific J. Math. 67 (1976), no.
2, 401--459.
\bibitem[H73]{H2} A. Hatcher, {\it The second obstruction for pseudo-isotopies. } Asterisque No. 6 (1973), 239--275, Soc. Math.	 de France, Paris.
\bibitem[H78]{H} A. Hatcher, {\it Concordance spaces, higher simple-homotopy theory, and applications.}  Algebraic and geometric topology (Proc. Sympos. Pure Math., Stanford Univ., Stanford, Calif., 1976), Part 1,  pp. 3--21, Proc. Sympos. Pure Math., XXXII, Amer. Math. Soc., Providence, R.I., 1978.
\bibitem[HW73]{HW}  A. Hatcher, J. Wagoner,  {\it Pseudo-isotopies of compact manifolds and the functor $K_2$.} Asterisque, No. 6. (1973), 8--238, Soc. Math. de France, Paris.
\bibitem[HKVWW02]{HKVWW} T. H\"uttemann, J. R. Klein, W. Vogell, F. Waldhausen, B. Williams, {\it The ``fundamental theorem'' for the algebraic K-theory of spaces. II. The canonical involution.} J. Pure Appl. Algebra 167 (2002), no. 1, 53--82. 
\bibitem[I84]{I84} K. Igusa, {\it What happens to Hatcher and Wagoner's formulas for $\pi_0C(M)$ when the first Postnikov invariant of $M$ is nontrivial?} Lecture Notes in Math., 1046, Springer-Verlag, New York, 1984, pp. 104--172.
\bibitem[I88]{I} K. Igusa, {\it
The stability theorem for smooth pseudoisotopies. }
K-Theory 2 (1988), no. 1-2, vi+355 pp. 
\bibitem[KH95]{KH} A. Katok, B. Hasselblatt, {\it Introduction to the modern theory of dynamical systems.} Cambridge University Press, (1995).
\bibitem[L00]{L} R. Leplaideur, {\it Local product structure for equilibrium states.}  Trans. Amer. Math. Soc.  352  (2000),  no. 4, 1889--1912.
\bibitem[dlL92]{dlL}
   R. de la Llave, {\it Smooth conjugacy and S-R-B measures for uniformly and non-uniformly hyperbolic systems.}  Comm. Math. Phys.  150  (1992),  no. 2, 289--320.
 \bibitem[M49]{Malcev}
    A. I. Mal$'$cev, {\it On a class of homogeneous spaces.} Izvestiya Akad. Nauk. SSSR. Ser. Mat. 13, (1949). 9--32.
   \bibitem[M74]{M} A. Manning, {\it There are no new Anosov diffeomorphisms on tori.} Amer. J. Math., 96 (1974),
422--429.
\bibitem[P66]{P} R.S. Palais, {\it Homotopy theory of infinite dimensional manifolds.} Topology 5 1966 1--16. 
\bibitem[W78]{Wald} F. Waldhausen, {\it Algebraic K-theory of topological spaces. I.}  Part 1, Proc. Sympos. Pure Math., XXXII, Amer. Math. Soc., Providence, R.I., 1978, 35--60.
\bibitem[W70]{W} P. Walters, {\it
Conjugacy properties of affine transformations of nilmanifolds.}
Math. Systems Theory 4 (1970), 327--333.
\end{thebibliography}
\end{document}